\newlist{steps}{enumerate}{1}
\setlist[steps, 1]{label = Step \arabic*:}
\newcommand{\gd}{\Delta}
\newcommand{\ggd}{\delta}
\newcommand{\tu}{\Tilde{u}}
\newcommand{\tv}{\Tilde{v}}
\newcommand{\tw}{\Tilde{w}}
\newcommand{\tg}{\Tilde{g}}
\newcommand{\inpt}[1]{\langle #1 \rangle}
\newcommand{\gw}{\Omega}
\newcommand{\ga}{\gamma}
\newcommand{\ms}{\mathscr}
\newcommand{\nb}{\nabla}
\newcommand{\vp}{\varphi}
\newcommand{\ve}{\varepsilon}
\newcommand{\pdr}{\partial}
\newcommand{\tup}{\textup}
\newcommand{\csg}{\{ S(t)\}_{t\geq0}}
\newcommand{\beq}{\begin{equation}}
\newcommand{\eeq}{\end{equation}}
\newcommand{\bea}{\begin{align}}
\newcommand{\eea}{\end{align}}
\newcommand{\bthm}{\begin{theorem}}
\newcommand{\ethm}{\end{theorem}}
\newcommand{\bpr}{\begin{proof}}
\newcommand{\epr}{\end{proof}}
\newcommand{\bcl}{\begin{corollary}}
\newcommand{\ecl}{\end{corollary}}
\newcommand{\bpn}{\begin{proposition}}
\newcommand{\epn}{\end{proposition}}
\newcommand{\bre}{\begin{remark}}
\newcommand{\ere}{\end{remark}}
\newcommand{\bdf}{\begin{definition}}
\newcommand{\edf}{\end{definition}}
\newcommand{\bss}{\begin{align*}}
\newcommand{\ess}{\end{align*}}
\newcommand{\bl}{\label}
\newtheorem{theorem}{Theorem}[section]
\newtheorem{corollary}[theorem]{Corollary}
\newtheorem{lemma}[theorem]{Lemma}
\newtheorem{proposition}[theorem]{Proposition}
\theoremstyle{definition}
\newtheorem{definition}[theorem]{Definition}
\theoremstyle{remark}
\newtheorem{remark}{Remark}
\numberwithin{equation}{section}
\begin{document}

\title[Hindmarsh-Rose Equations]{Exponential Attractor for Hindmarsh-Rose Equations in Neurodynamics}

\author[C. Phan]{Chi Phan}%$^{\dag}$
\address{Department of Mathematics and Statistics, University of South Florida, Tampa, FL 33620, USA}
\email{chi@mail.usf.edu}
\thanks{}

\author[Y. You]{Yuncheng You}%$^{\dag}$
\address{Department of Mathematics and Statistics, University of South Florida, Tampa, FL 33620, USA}
\email{you@mail.usf.edu}
\thanks{}

%    General info
\subjclass[2000]{Primary: 35B41, 35K57, 37L30, 37L55; Secondary: 37N25, 92C20}

\date{August 14, 2019}

%\dedicatory{This paper is dedicated to }

\keywords{Hindmarsh-Rose equations, exponential attractor, squeezing property, asymptotic compactness, finite fractal dimension}

\begin{abstract} 
The existence of an exponential attractor for the diffusive Hindmarsh-Rose equations on a three-dimensional bounded domain originated in the study of neurodynamics is proved through uniform estimates together with a new theorem on the squeezing property of an abstract reaction-diffusion equation also proved in this paper. The results infer that the global attractor whose existence has been established in \cite{PYS} for the Hindmarsh-Rose semiflow has a finite fractal dimension. 
\end{abstract}

\maketitle

\section{\textbf{Introduction}}

The Hindmarsh-Rose equations for neuronal spiking-bursting of the intracellular membrane potential observed in experiments was originally proposed in \cite{HR1, HR2}. This mathematical model composed of three coupled ordinary differential equations has been studied through numerical simulations and mathematical analysis in recent years, cf. \cite{HR1, HR2, IG, MFL, SPH, Su} and the references therein. 

We shall study in this paper the global dynamics in terms of the existence of an exponential attractor for the diffusive Hindmarsh-Rose equations:
\begin{align}
    \frac{\pdr u}{\pdr t} & = d_1 \gd u +  \vp (u) + v - w + J, \, \bl{ueq} \\
    \frac{\pdr v}{\pdr t} & = d_2 \gd v + \psi (u) - v, \bl{veq} \\
    \frac{\pdr w}{\pdr t} & = d_3 \gd w + q (u - c) - rw, \bl{weq}
\end{align}
for $t > 0,\; x \in \gw \subset \mathbb{R}^{n}$ ($n \leq 3$), where $\gw$ is a bounded domain with locally Lipschitz continuous boundary, $J $ is a constant and the nonlinear terms 
\beq \bl{pp}
	\vp (u) = au^2 - bu^3, \quad \text{and} \quad \psi (u) = \alpha - \beta u^2.
\eeq 
In this system \eqref{ueq}-\eqref{weq}, the variable $u(t,x)$ refers to the membrane electric potential of a neuronal cell, the variable $v(t, x)$ represents the transport rate of the ions of sodium and potassium through the fast ion channels and is called the spiking variable, while the variables $w(t, x)$ represents the transport rate across the neuronal cell membrane through slow channels of calcium and other ions and is called the bursting variable. 

All the involved parameters are positive constants except $c \,(= u_R) \in \mathbb{R}$, which is a reference value of the membrane potential of a neuron cell. In the original model of ODE \cite{Su}, a set of the typical parameters are
\begin{gather*}
	J = 3.281, \;\; r = 0.0021, \;\; S = 4.0, \; \; q = rS,  \;\; c = -1.6,  \\[3pt]
	 \vp (s) = 3.0 s^2 - s^3, \;\; \psi (s) = 1.0 - 5.0 s^2.
\end{gather*}
We impose the homogeneous Neumann boundary conditions for the three components,
\begin{equation} \label{nbc}
    \frac{\pdr u}{\pdr \nu} (t, x) = 0, \; \; \frac{\pdr v}{\pdr \nu} (t, x)= 0, \; \; \frac{\pdr w}{\pdr \nu} (t, x)= 0,\quad  t > 0,  \; x \in \partial \gw ,
\end{equation}
and the initial conditions to be specified are denoted by
\begin{equation} \bl{inc}
    u(0, x) = u_0 (x), \quad v(0, x) = v_0 (x), \quad w(0, x) = w_0 (x), \quad x \in \gw.
\end{equation}

\subsection{\textbf{The Hindmarsh-Rose Model in ODE}}

In 1982-1984, J.L. Hindmarsh and R.M. Rose developed a mathematical model to describe neuronal activity and dynamics:
\begin{equation} \label{HR}
	\begin{split}
    \frac{du}{dt} & = au^2 - bu^3 + v - w + J,  \\
    \frac{dv}{dt} & = \alpha - \beta u^2  - v,  \\
    \frac{dw}{dt} & =  q (u - u_R) - rw.
    \end{split}
\end{equation}

This neuron model was motivated by the discovery of neuronal cells in the pond snail \emph{Lymnaea} which generated a burst after being depolarized by a short current pulse. This model characterizes the phenomena of synaptic bursting and especially chaotic bursting in a three-dimensional $(u, v, w)$ space. 

Neuronal signals are short electrical pulses called spikes or action potential. Neurons often exhibit bursts of alternating phases of rapid firing spikes and then quiescence. Bursting constitutes a mechanism to modulate and set the pace for brain functionalities and to communicate signals with the neighbor neurons. 

Bursting behaviors and patterns occur in a variety of excitable cells and bio-systems such as pituitary melanotropic gland, thalamic neurons, respiratory pacemaker neurons, and insulin-secreting pancreatic $\beta$-cells, cf. \cite{BRS, CK,CS, HR2}. Neurons communicate and coordinate actions through synapses or diffusive coupling called gap junction in neuroscience. Synaptic coupling of neurons has to reach certain threshold for release of quantal vesicles and synchronization \cite{DJ, Ru, SC}. 

The mathematical analysis mainly using bifurcations together with numerical simulations of several models in ODEs on bursting behavior has been studied by many authors, cf. \cite{BB, ET, Iz, MFL, Ri, SPH, SR, Tr, WS, Su}. 

The chaotic coupling exhibited in the simulations and analysis of this Hindmarsh-Rose model in ordinary differential equations shows more rapid synchronization and more effective regularization of neurons due to \emph{lower threshold} than the synaptic coupling \cite{Tr}. It was rigorously proved in \cite{SPH, Su} that chaotic bursting solutions can be quickly synchronized and regularized when the coupling strength is large enough to topologically change the bifurcation diagram based on this Hindmarsh-Rose model, but the dynamics of chaotic bursting is highly complex.

It is known that Hodgkin-Huxley equations \cite{HH} (1952) provided a four-dimensional model for the dynamics of membrane potential taking into account of the sodium, potassium as well as leak ions current. It is a highly nonlinear system if without simplification assumptions. FitzHugh-Nagumo equations \cite{FH} (1961-1962) provided a two-dimensional model for an excitable neuron with the membrane potential and the current variable. This two-dimensional ODE model admits an exquisite phase plane analysis showing spikes excited by supra-threshold input pulses and sustained periodic spiking with refractory period, but due to the 2D nature FitzHugh-Nagumo equations exclude any chaotic solutions and chaotic dynamics so that no chaotic bursting can be generated. 

The Hindmarsh-Rose equations \eqref{HR} generate a significant mechanism for rapid firing and chaotic busting in neurodynamics. This model reflects possible lower down the neuron firing threshold and allows for spikes with varying interspike-interval. Therefore, this 3D model is a suitable choice for the investigation of both the regular bursting and the chaotic bursting when the parameters vary. The study of dynamical properties of the Hindmarsh-Rose equations exposes to a wide range of applications in neuroscience. 

The rest of Section 1 presents the formulation of the system \eqref{ueq}-\eqref{inc} and provides the relevant concepts and the recent results in \cite{PYS} on the existence of global attractor for this diffusive Hindmarsh-Rise equations as well as the existing theory on exponential attractors. In Section 2, we shall prove a general theorem of the squeezing property for the abstract reaction-diffusion equation on a higher dimensional bounded domain. In Section 3, the main result on the existence of exponential attractor is established for the semiflow generated by the diffusive Hindmarsh-Rose equations.

\subsection{\textbf{Formulation and Preliminaries}}

Neuron is a specialized biological cell in the brain and the central nervous system. In general, neurons are composed of the central cell body containing the nucleus and intracellular organelles, the dendrites, the axon, and the terminals. The dendrites are the short branches near the nucleus receiving incoming signals of voltage pulse and the axon is a long branch to propagate outgoing signals.

Neurons are immersed in aqueous chemical solutions consisting of different ions electrically charged. The cell membrane is the conductor along which the voltage signals travel. As pointed out in \cite{EI}, neuron is a distributed dynamical system. 

From physical and mathematical viewpoint, it is reasonable and useful to consider the Hindmarsh-Rose model in partial differential equations with the spatial variables $x$ involved, at least in $\mathbb{R}^1$.  Here in the abstract extent, we shall study the diffusive Hindmarsh-Rose equations \eqref{ueq}-\eqref{weq} on a bounded domain $\gw$ of the space $\mathbb{R}^3$ and focus on the global asymptotic dynamics of the solutions.

We start with formulation of the aforementioned initial-boundary value problem of \eqref{ueq}--\eqref{inc} into an abstract evolutionary equation. Define the Hilbert space $H = [L^2 (\gw)]^3 = L^2 (\gw, \mathbb{R}^3)$ and the Sobolev space $E =  [H^{1}(\gw)]^3 = H^1 (\gw, \mathbb{R}^3)$. The norm and inner-product of $H$ or $L^2 (\gw)$ will be denoted by $\| \, \cdot \, \|$ and $\inpt{\,\cdot , \cdot\,}$, respectively. The norm of $E$ will be denoted by $\| \, \cdot \, \|_E$. The norm of $L^p (\gw)$ or $L^p (\gw, \mathbb{R}^3)$ will be dented by $\| \cdot \|_{L^p}$ if $p \neq 2$. We use $| \, \cdot \, |$ to denote a vector norm in a Euclidean space.

The initial-boundary value problem \eqref{ueq}--\eqref{inc} is formulated as an initial value problem of the evolutionary equation:
\begin{equation} \label{pb}
 	\begin{split}
   	& \frac{\partial g}{\partial t} = A g + f(g), \quad t > 0, \\[2pt]
    	g &\, (0) = g_0 = (u_0, v_0, w_0) \in H.
	\end{split}
\end{equation}
Here the nonpositive self-adjoint operator
\begin{equation} \label{opA}
        A =
        \begin{pmatrix}
            d_1 \gd  & 0   & 0 \\[3pt]
            0 & d_2 \gd  & 0 \\[3pt]
            0 & 0 & d_3 \gd
        \end{pmatrix}
        : D(A) \rightarrow H,
\end{equation}
where $D(A) = \{g \in H^2(\gw, \mathbb{R}^3): \pdr g /\pdr \nu = 0 \}$ is the generator of an analytic $C_0$-semigroup $\{e^{At}\}_{t \geq 0}$ on the Hilbert space $H$ due to the Lumer-Phillips theorem  \cite{SY}. By the fact that $H^{1}(\gw) \hookrightarrow L^6(\gw)$ is a continuous imbedding for space dimension $n \leq 3$ and by the H\"{o}lder inequality, there is a constant $C_0 > 0$ such that 
$$
    \| \vp (u)  \| \leq C_0 \| u \|_{L^6}^3 \quad \tup{and} \quad \|\psi (u) \| \leq C_0 \| u \|_{L^4}^2 \quad \textup{for} \; u \in L^6 (\gw).
$$
Therefore, the nonlinear mapping 
\begin{equation} \label{opf}
    f(u,v, w) =
        \begin{pmatrix}
             \vp (u) + v - w + J \\[4pt]
            \psi (u) - v,  \\[4pt]
	     q (u - c) - rw
        \end{pmatrix}
        : E \longrightarrow H
\end{equation}
is a locally Lipschitz continuous mapping. We can simply write column vectors $g(t)$ as $(u(t, \cdot), v(t, \cdot ), w(t, \cdot))$ and write $g_0 = (u_0, v_0, w_0)$. Consider the weak solution of this initial value problem \eqref{pb}, cf. \cite[Section XV.3]{CV}, defined below. 
\begin{definition} \label{D:wksn}
	A function $g(t, x), (t, x) \in [0, \tau] \times \gw$, is called a \emph{weak solution} to the initial value problem \eqref{pb}, if the following conditions are satisfied:
	
	\textup{(i)} $\frac{d}{dt} (g, \zeta) = (Ag, \zeta) + (f(g), \zeta)$ is satisfied for a.e. $t \in [0, \tau]$ and for any $\zeta \in E$;
	
	\textup{(ii)} $g(t, \cdot) \in L^2 (0, \tau; E) \cap C_w ([0, \tau]; H)$ such that $g(0) = g_0$.
	
\noindent
Here $(\cdot , \cdot)$ stands for the dual product of $E^*$ and $E$, and $C_w$ stands for the weakly continuous functions valued in $H$. Moreover, a function $g(t, x), (t, x) \in [0, \tau] \times \gw$, is a \emph{strong solution} of this initial value problem \eqref{pb} if it is a weak solution and satisfies the condition of regularity in \eqref{soln} below on a time interval $[0, \tau]$ and if the evolutionary equation \eqref{pb} is satisfied in the space $H$ for almost every $t \in (0, \tau)$. 
\end{definition}

In \cite{PYS}, the two authors and J. Su proved the following result on the existence of global solutions to the initial value problem \eqref{pb}.

\begin{theorem} \label{Lm2}
For any given initial data $g_0 = (u_0, v_0, w_0) \in H$, there exists a unique global weak solution $g(t, g_0) ) = (u(t), v(t), w(t)), \, t \in [0, \infty)$, of the initial value problem \eqref{pb} for the diffusive Hindmarsh-Rose equations \eqref{ueq}-\eqref{weq}, which continuously depends on the initial data and satisfies 
\begin{equation} \label{soln}
    g \in C([0, \infty); H) \cap C^1 ((0, \infty); H) \cap L_{loc}^2 ([0, \infty); E).
\end{equation}
All the weak solution becomes a strong solution on the interval $(0, \infty)$. The time-parametrized mapping $\{S(t) g_0 = g(t, g_0), t \geq 0\}$ is called the Hindmarsh-Rose semiflow.
\end{theorem}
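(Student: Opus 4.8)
The statement is a standard global well-posedness result for a semilinear parabolic system, and the natural route has four stages: local existence, global a priori bounds, uniqueness with continuous dependence, and the parabolic smoothing that promotes weak solutions to strong ones. For \emph{local existence} two essentially equivalent approaches work. One is the analytic-semigroup route: since $A$ in \eqref{opA} generates an analytic $C_0$-semigroup on $H$ with the smoothing estimate $\|e^{At}\|_{\mathcal{L}(H,E)} \le C\,t^{-1/2}$, and $f:E\to H$ in \eqref{opf} is locally Lipschitz, a contraction-mapping argument on the integral equation $g(t) = e^{At}g_0 + \int_0^t e^{A(t-s)}f(g(s))\,ds$ in $C([0,\tau];H)\cap C((0,\tau];E)$ produces a unique local solution on a maximal interval, with the interior regularity \eqref{soln}. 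The other is a Faedo--Galerkin scheme in the eigenbasis of $-A$; I would actually carry out the Galerkin construction, since it makes the global step transparent. In both cases the nonlinear term is meaningful because $H^1(\gw)\hookrightarrow L^6(\gw)$ for $n\le3$, so $\vp(u)=au^2-bu^3\in L^2(\gw)$ whenever $u\in E$.

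The crux is the \emph{global a priori estimate}. Testing \eqref{ueq} with $u$, \eqref{veq} with $v$, \eqref{weq} with $w$ (on the Galerkin level), adding, and using $\int_\gw(au^2-bu^3)u\,dx \le -\tfrac{b}{2}\|u\|_{L^4}^4 + C|\gw|$, one obtains
\[
\tfrac{1}{2}\tfrac{d}{dt}\big(\|u\|^2+\|v\|^2+\|w\|^2\big) + d_1\|\nb u\|^2 + d_2\|\nb v\|^2 + d_3\|\nb w\|^2 + \tfrac{b}{2}\|u\|_{L^4}^4 + \|v\|^2 + r\|w\|^2 \le R ,
\]
where $R$ collects the bilinear couplings $\int_\gw(v-w)u\,dx$, $-\beta\int_\gw u^2v\,dx$, $q\int_\gw uw\,dx$ and the affine terms in $J,\alpha,c$. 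Each quadratic-in-$u$ piece of $R$ is $\le \tfrac{b}{4}\|u\|_{L^4}^4 + C + C(\|v\|^2+\|w\|^2)$ by Young's inequality, and the cubic coupling obeys $\beta\int_\gw u^2v\,dx \le \beta\|u\|_{L^4}^2\|v\| \le \tfrac{b}{4}\|u\|_{L^4}^4 + C\|v\|^2$, so with $\Phi=\|u\|^2+\|v\|^2+\|w\|^2$,
\[
\tfrac{d}{dt}\Phi + c_0\big(\|\nb u\|^2+\|\nb v\|^2+\|\nb w\|^2+\|u\|_{L^4}^4\big) \le C_1\Phi + C_2 .
\]
Gronwall then gives $\Phi(t)\le(\Phi(0)+C_2/C_1)e^{C_1t}$, finite on every $[0,T]$ — this is all global existence needs, the sharper uniform/absorbing estimate of \cite{PYS} not being required here — and integrating the displayed inequality yields $g\in L^2(0,T;E)$. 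All bounds are uniform in the Galerkin index (with the time derivatives bounded in $L^1(0,T;E^*)$), so by Aubin--Lions-type compactness one passes to the limit, the strong convergence of $u_N$ in $L^2(0,T;L^2(\gw))$ handling the cubic term; one obtains a global weak solution on $[0,\infty)$ with the regularity \eqref{soln}, and the energy identity upgrades weak to strong continuity in $H$.

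For \emph{uniqueness and continuous dependence}, take two weak solutions $g_i=(u_i,v_i,w_i)$ with data $g_{0,i}$, subtract, and test with $\bar g=g_1-g_2$. The cubic contribution $-b\int_\gw(u_1^3-u_2^3)(u_1-u_2)\,dx = -b\int_\gw(u_1-u_2)^2(u_1^2+u_1u_2+u_2^2)\,dx\le0$ drops out favorably; the term $a\int_\gw(u_1+u_2)(u_1-u_2)^2\,dx$ and the coupling $-\beta\int_\gw(u_1+u_2)(u_1-u_2)(v_1-v_2)\,dx$ are estimated by H\"older and the $L^2$--$L^6$ interpolation, absorbing small multiples of $\|\nb\bar u\|^2$ into the diffusion and leaving factors that lie in $L^1(0,T)$ thanks to $u_i\in L^2(0,T;E)$; the remaining linear couplings are immediate. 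This gives $\tfrac{d}{dt}\|\bar g\|^2 \le m(t)\|\bar g\|^2$ with $m\in L^1(0,T)$, hence $\|\bar g(t)\|^2 \le \|\bar g(0)\|^2\exp\!\int_0^t m$, i.e. uniqueness and Lipschitz dependence in $H$ on compact intervals. Finally, since $g\in L^2(0,T;E)$, for a.e.\ $t_0>0$ we have $g(t_0)\in E$; restarting the analytic-semigroup representation from $t_0$ and bootstrapping ($f(g)\in C([t_0,\infty);H)$, parabolic smoothing) yields $g\in C((0,\infty);D(A))\cap C^1((0,\infty);H)$, so \eqref{pb} holds in $H$ for every $t>0$ and the weak solution is a strong solution on $(0,\infty)$.

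The genuinely delicate point is the a priori estimate of the second step: the coupling $-\beta\int_\gw u^2v\,dx$ is quadratic in $u$ and must be tamed by the single good quartic term $-b\|u\|_{L^4}^4$ coming from $\vp$. It cannot in general be absorbed while keeping the coefficient of $\|v\|^2$ negative — which is precisely why one settles for the (sufficient) exponentially growing bound rather than a uniform one — and controlling it together with the symmetric term $q\int_\gw uw\,dx$, while retaining enough compactness to pass to the limit in the cubic nonlinearity, is where the real work lies; everything else is routine parabolic theory.
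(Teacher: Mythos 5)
The paper contains no proof of this theorem---it is imported verbatim from \cite{PYS}---but your outline (Galerkin or mild-solution local theory, the quartic coercivity of $-b\int_\gw u^4\,dx$ absorbing the couplings $-\beta\int_\gw u^2v\,dx$ and $q\int_\gw uw\,dx$ via Young to give a Gronwall bound, the nonpositivity of $-b\int_\gw(u_1^3-u_2^3)(u_1-u_2)\,dx$ for uniqueness, and parabolic bootstrapping from $g(t_0)\in E$ for the strong-solution claim) is exactly the standard route that the cited reference follows, and the estimates you sketch are correct. The only factual quibble is your closing remark that a uniform (non-exponential) bound is unattainable: \cite{PYS} does obtain a uniform absorbing estimate (quoted here as Theorem \ref{AbE}) by a different grouping of the cross terms, though, as you say, the exponentially growing bound suffices for global existence.
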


\subsection{\textbf{Global Attractor and Exponential Attractor}}

We refer to \cite{CLR, CV, Milani, Rb, SY, Tm} for the basic concepts and results in the theory of infinite dimensional dynamical systems, including the few listed here for clarity.

\begin{definition} \label{Dabsb}
Let $\{S(t)\}_{t \geq 0}$ be a semiflow on a Banach space $\ms{X}$. A bounded set $B_0$ of $\ms{X}$ is called an absorbing set for this semiflow, if for any given bounded subset $B \subset \ms{X}$ there is a finite time $T_0 \geq 0$ depending on $B$, such that $S(t)B \subset B_0$ for all $t \geq T_0$.
\end{definition}

\begin{definition}[Global Attractor] \label{Dgla}
A set $\mathscr{A}$ in a Banach space $\ms{X}$ is called a global attractor for a semiflow $\csg$ on $\ms{X}$, if the following two properties are satisfied:

(i) $\mathscr{A}$ is a nonempty, compact, and invariant set in the space $\ms{X}$. 

(ii) $\mathscr{A}$ attracts any given bounded set $B \subset \ms{X}$ in the sense 
$$
	\text{dist}_{\ms{X}} (S(t)B, \mathscr{A}) = \sup_{x \in B} \inf_{y \in \mathscr{A}} \| S(t)x - y \|_{\ms{X}} \to 0, \;\;  \text{as} \; \; t \to \infty.
$$
\end{definition}

Global attractor characterizes qualitatively the longtime, asymptotic, and global dynamics of all the solution trajectories of a PDE system. As specified in \cite{CV, Rb, SY, Tm} as well as in \cite{Y08, Y10}, global attractor is a depository (usually fractal finite-dimensional) of all the permanent regimes including chaotic structures of an infinite-dimensional dynamical system. Global dynamic patterns are also important in neural field and neural network theories \cite{Co, ET}. For the autocatalytic reaction-diffusion systems \cite{Y08, Y10, Y12, Y14}, it is proved that global attractors exist. 

Recently in \cite{PYS}, the two authors and J. Su proved the following theorems on the absorbing property of the Hindmarsh-Rose semiflow and the existence of global attractor for the diffusive Hindmarsh-Rose equations \eqref{pb}.

\begin{theorem} \label{AbE}
	For any given bounded set $B \subset H$, there exists a finite time $T_B > 0$ such that for any initial state $g_0 = (u_0, v_0, w_0) \in B$, the weak solution $g(t) = S(t)g_0 = (u(t), v(t), w(t))$ of the initial value problem \eqref{pb} uniquely exists for $t \in [0, \infty)$ and satisfies 
\beq \label{ac}
	\| (u(t), v(t), w(t))\|_E \leq Q, \quad \text {for} \;\; t \geq T_B,
\eeq
where $Q > 0$ is a constant independent of any bounded set $B$ in $H$, and the finite $T_B > 0$ only depends on the bounded set $B$.
\end{theorem}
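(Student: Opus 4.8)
The global existence, uniqueness and regularity asserted in \eqref{soln}, together with continuous dependence on data, is exactly Theorem~\ref{Lm2}, so the task reduces to proving the uniform absorbing estimate \eqref{ac}. The plan is a two-tier energy argument: first produce an absorbing ball in $H$ whose radius is independent of $B$, and then bootstrap to an absorbing ball in $E=H^1(\Omega,\mathbb{R}^3)$ by a gradient (second-order) estimate combined with the uniform Gronwall lemma.

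\textbf{Step 1 (dissipation in $H$).} Test \eqref{ueq}, \eqref{veq}, \eqref{weq} against $u$, $\lambda v$, $\mu w$ with positive weights $\lambda,\mu$ to be fixed small, and add. The cubic nonlinearity contributes $a\int_\Omega u^3-b\int_\Omega u^4$, and since $a\int_\Omega u^3\le \tfrac{b}{4}\|u\|_{L^4}^4+C|\Omega|$, a genuine negative term $-\tfrac{b}{2}\|u\|_{L^4}^4$ survives; note also $\|u\|^2\le\varepsilon\|u\|_{L^4}^4+C(\varepsilon,\Omega)$, which supplies the missing dissipation in $u$. The off-diagonal couplings $\int_\Omega uv$, $\int_\Omega uw$ and the mixed term $-\lambda\beta\int_\Omega u^2v$ are absorbed by a standard chain of Young's inequalities: the $u$-parts go into $\tfrac{b}{4}\|u\|_{L^4}^4$, and the $v,w$-parts are dominated by the honest negative terms $-\lambda\|v\|^2$, $-\mu r\|w\|^2$ once $\lambda,\mu$ (and the auxiliary Young weights) are chosen small enough. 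This yields a dissipative inequality
\[
  \frac{d}{dt}\,\Phi(t)+\delta\,\Phi(t)+c_1\big(\|\nabla u\|^2+\|\nabla v\|^2+\|\nabla w\|^2\big)+c_2\|u\|_{L^4}^4\le C_1,
  \qquad \Phi:=\|u\|^2+\lambda\|v\|^2+\mu\|w\|^2,
\]
with $\delta,c_1,c_2,C_1$ depending only on the equation parameters. Gronwall's inequality gives $\|g(t)\|^2\le K$ for $t\ge T_1(B)$ with $K$ independent of $B$, and integrating over $[t,t+1]$ gives $\int_t^{t+1}\big(\|\nabla g\|^2+\|u\|_{L^4}^4\big)\,ds\le K'$ uniformly for $t\ge T_1(B)$.

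\textbf{Step 2 (dissipation in $E$).} Test \eqref{ueq}, \eqref{veq}, \eqref{weq} against $-\Delta u$, $-\lambda\Delta v$, $-\mu\Delta w$ and add, using $-\int_\Omega\varphi(u)\Delta u=\int_\Omega\varphi'(u)|\nabla u|^2=\int_\Omega(2au-3bu^2)|\nabla u|^2$. The term $-3b\int_\Omega u^2|\nabla u|^2\le0$ is favorable, and the remaining bad terms $2a\int_\Omega|u|\,|\nabla u|^2$, the couplings $\int_\Omega(v-w+J)\Delta u$, $q\int_\Omega u\,\Delta w$, and $-\lambda\beta\int_\Omega u^2\Delta v$ are all estimated by the embedding $H^1(\Omega)\hookrightarrow L^6(\Omega)$ (valid for $n\le3$) and Gagliardo--Nirenberg interpolation between $L^2$ and $H^1$, absorbing the top-order quantities into $d_i\|\Delta\cdot\|^2$. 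The outcome is a differential inequality
\[
  \frac{d}{dt}\,\Psi(t)+c_3\big(\|\Delta u\|^2+\|\Delta v\|^2+\|\Delta w\|^2\big)\le h(t)\,\Psi(t)+C_2,
  \qquad \Psi:=\|\nabla u\|^2+\lambda\|\nabla v\|^2+\mu\|\nabla w\|^2,
\]
where, by Step 1, both $\int_t^{t+1}h(s)\,ds\le K''$ and $\int_t^{t+1}\Psi(s)\,ds\le K''$ uniformly for $t\ge T_1(B)$. The uniform Gronwall lemma then yields $\Psi(t)\le Q_1$ for all $t\ge T_B:=T_1(B)+1$, with $Q_1$ independent of $B$; combining with $\|g(t)\|^2\le K$ gives \eqref{ac} with $Q^2:=K+Q_1$.

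\textbf{Anticipated main obstacle.} The delicate point is Step 2: since only $g_0\in H$ is assumed, there is no pointwise bound on $u$, so the cross term $\int_\Omega|u|\,|\nabla u|^2$ (and analogously $\int_\Omega u^2|\nabla v|$, which arises after integrating $-\lambda\beta\int_\Omega u^2\Delta v$ by parts) cannot be controlled by an $L^\infty$ estimate. The resolution is dimension-specific: for $n\le3$ one uses $\|u\|_{L^6}\le C\|u\|_{H^1}$ and interpolation inequalities of Gagliardo--Nirenberg type to write these terms as a small multiple of $\sum_i\|\Delta\cdot\|^2$ plus a factor $h(t)$ that is integrable in time with the integral controlled by the Step~1 bounds, while the favorable negative term $-3b\int_\Omega u^2|\nabla u|^2$ gives extra room. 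Once the inequality is brought to the displayed form the uniform Gronwall lemma finishes the proof, and all the constants produced ($K$, $Q_1$, hence $Q$) depend only on the structural parameters of \eqref{ueq}--\eqref{weq}, whereas $T_B$ merely absorbs the initial transient and therefore depends only on $B$.
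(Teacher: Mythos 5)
This theorem is not proved in the present paper: it is imported verbatim from the companion work \cite{PYS}, and your two-tier energy argument (an $L^2$ absorbing estimate driven by the cubic dissipation $-b\|u\|_{L^4}^4$ with small weights $\lambda,\mu$ on the $v,w$ equations, followed by a gradient estimate against $-\Delta u,-\lambda\Delta v,-\mu\Delta w$ closed by the uniform Gronwall lemma) is essentially the same route taken there. The argument is sound; the only point worth making explicit is that the $H^1$-level testing is legitimate because Theorem \ref{Lm2} upgrades weak solutions to strong solutions on $(0,\infty)$, so the second tier may be run starting from $t\ge T_1(B)$ where the Step~1 integral bounds already hold.
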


We shall call the time-parametrized mapping $\{S(t)\}_{t \geq 0}$ the Hindmarsh-Rose semiflow, which is a dynamical system on the space $H$.

\begin{theorem}[\textbf{Global Attractor for Diffusive Hindmarsh-Rose Equations}]  \label{MTh}
	For any positive parameters $d_1, d_2, d_3, a, b, \alpha, \beta, q, r, J$ and $c \in \mathbb{R}$, there exists a global attractor $\mathscr{A}$ in the phase space $H = L^2 (\gw, \mathbb{R}^3)$ for the Hindmarsh-Rose semiflow $\{S(t)\}_{t \geq 0}$ generated by the weak solutions of the diffusive Hindmarsh-Rose equations \eqref{pb}. Moreover, the global attractor $\mathscr{A}$ is an $(H, E)$-global attractor.
\end{theorem}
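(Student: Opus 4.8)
The plan is to invoke the classical existence criterion for a global attractor (see \cite{SY, Tm, Rb}): a continuous semiflow on a Banach space that possesses a bounded absorbing set and is asymptotically compact admits a global attractor, realized as the $\omega$-limit set of the absorbing set. All three ingredients are essentially at hand. Continuity of $\{S(t)\}_{t\geq 0}$ on $H$ for each fixed $t\geq 0$ is the continuous dependence on initial data asserted in Theorem \ref{Lm2}. A bounded absorbing set is furnished by Theorem \ref{AbE}: the closed ball $B_0 = \{g\in E : \|g\|_E \leq Q\}$ absorbs every bounded subset of $H$, being absorbing already in $E$ and hence a fortiori in $H$.

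For asymptotic compactness in $H$, the key observation is that $B_0$ is bounded in $E = [H^1(\gw)]^3$ while the imbedding $E \hookrightarrow H = [L^2(\gw)]^3$ is compact because $\gw$ is a bounded domain (Rellich--Kondrachov). Thus $B_0$ is precompact in $H$. Given any bounded sequence $\{g_{0,n}\}\subset H$ and times $t_n\to\infty$, for all large $n$ one has $S(t_n)g_{0,n}\in B_0$, so $\{S(t_n)g_{0,n}\}$ has an $H$-convergent subsequence; this is exactly asymptotic compactness. Consequently the global attractor $\mathscr{A} = \omega(B_0) = \bigcap_{s\geq 0}\overline{\bigcup_{t\geq s} S(t)B_0}$ exists in $H$: it is nonempty, compact, invariant, and attracts all bounded subsets of $H$ in the $H$-metric. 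By invariance together with $\mathscr{A}\subset B_0$, the attractor $\mathscr{A}$ is also bounded in $E$.

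To upgrade this to an $(H,E)$-global attractor, I would establish a uniform-in-time higher-order estimate showing that, after a finite entry time, trajectories issuing from $H$-bounded data are bounded in $D(A)\subset [H^2(\gw)]^3$: test \eqref{ueq}--\eqref{weq} against $-\gd u,\ -\gd v,\ -\gd w$ respectively and exploit that the genuinely cubic term contributes, after integration by parts, the coercive quantity $-3b\int_{\gw} u^2|\nb u|^2 \leq 0$, which by Young's inequality absorbs the indefinite cross term $2a\int_{\gw} u|\nb u|^2$ arising from $\vp(u)$; the remaining terms are controlled by the already-available $E$-bounds and the Sobolev imbeddings valid for $n\leq 3$. (Equivalently one may run the variation-of-constants formula using the smoothing of the analytic semigroup $\{e^{At}\}$.) This produces a bounded absorbing set in $D(A)$, which is compact in $E$, so the semiflow is asymptotically compact in $E$ as well. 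A standard contradiction argument then finishes: if $\dist_E(S(t_n)g_{0,n},\mathscr{A})\geq\varepsilon$ for some bounded $\{g_{0,n}\}\subset H$ and $t_n\to\infty$, extract an $E$-convergent subsequence of $S(t_n)g_{0,n}$ via $D(A)\hookrightarrow\hookrightarrow E$, and identify its limit with a point of $\mathscr{A}$ using the $(H,H)$-attraction already proved, a contradiction; hence $\dist_E(S(t)B,\mathscr{A})\to 0$ for every bounded $B\subset H$.

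The verification of the three classical hypotheses is routine; the only substantive work is the $D(A)$ (equivalently $H^2$) uniform estimate, and there the main obstacle is the cubic nonlinearity $\vp(u)=au^2-bu^3$ in dimension $n=3$, where $\|u\|_{L^\infty}$ is not bounded by $\|u\|_E$. This is precisely what the favorable sign of the $-bu^3$ term resolves, since after integration by parts it yields $\int_{\gw}u^2|\nb u|^2$, which dominates the quadratic cross term; the $v$- and $w$-equations are milder, as $\psi(u)$ is only quadratic and $v,w$ are already known to be bounded in $E$ by Theorem \ref{AbE}.
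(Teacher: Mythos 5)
You should first note that this paper does not actually prove Theorem \ref{MTh}: it is imported verbatim from \cite{PYS} (``Recently in \cite{PYS}, the two authors and J.~Su proved the following theorems\dots''), so there is no in-paper proof to compare against. Judged on its own, the first half of your argument is the standard and correct route, and almost certainly the one taken in \cite{PYS}: continuity of $S(t)$ on $H$ from Theorem \ref{Lm2}, the $E$-bounded absorbing ball $B_E(Q)$ from Theorem \ref{AbE}, precompactness of that ball in $H$ via the compact imbedding $E\hookrightarrow\hookrightarrow H$ (Rellich--Kondrachov on the bounded Lipschitz domain $\gw$), hence asymptotic compactness and $\mathscr{A}=\omega(B_E(Q))$. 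This part is sound.

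The one step you should repair is the regularity upgrade for the $(H,E)$ statement. Testing \eqref{ueq}--\eqref{weq} against $-\gd u$, $-\gd v$, $-\gd w$ produces the differential inequality $\frac{1}{2}\frac{d}{dt}\|\nb u\|^2 + d_1\|\gd u\|^2 + 3b\int_\gw u^2|\nb u|^2\,dx \le 2a\int_\gw u|\nb u|^2\,dx + \langle v-w+J,\,-\gd u\rangle$, which (after the Young absorption you describe) yields a uniform bound on $\|\nb u\|$ and on $\int_t^{t+1}\|\gd u\|^2\,ds$ --- i.e.\ exactly the $E$-absorbing set already furnished by Theorem \ref{AbE}, \emph{not} a pointwise-in-time bound in $D(A)$. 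To get an absorbing set bounded in a space compactly imbedded in $E$ you must go one rung higher, either by a further energy estimate on $\|\gd u\|^2$ (testing against $\gd^2 u$, which requires re-examining the cubic term) or, more cleanly, by the variation-of-constants route you mention only parenthetically: since $\|f(g)\|_H$ is bounded on $B_E(Q)$ (because $\|\vp(u)\|\le C_0\|u\|_{L^6}^3$ and $H^1\hookrightarrow L^6$ for $n\le 3$), the analytic-semigroup smoothing $\|(-A)^{\theta}e^{At}\|_{\mathcal{L}(H)}\lesssim t^{-\theta}$ bounds $S(t)B_E(Q)$, $t\ge 1$, in $D((-A)^{\theta})$ for any $\theta<1$, and $D((-A)^{\theta})\hookrightarrow\hookrightarrow E$ for $\theta>1/2$. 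That closes the gap, and your concluding contradiction argument for $E$-attraction then goes through as written. So the proposal is correct in outline, but the $D(A)$ estimate as literally described does not deliver what is claimed; promote the parenthetical semigroup argument to the main line of the proof.
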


Global attractor for an infinite-dimensional dynamical systems generated by evolutionary PDE may exhibit slow rates and complicated behavior in attraction of solution trajectories. The notion of exponential attractor was introduced in \cite{Eden}.  

\begin{definition}[Exponential Attractor] \label{EA}
Suppose that $X$ is a Banach space and $\{S(t)\}_{t \geq 0}$ is a semiflow on $X$. A subset $\mathscr{E} \subset X$ is called an exponential attractor for this semiflow if the following three conditions are satisfied:

1) $\mathscr{E}$ is a compact in $X$ with a finite fractal dimension.

2) $\mathscr{E}$ is positively invariant with respect to the semiflow $\{S(t)\}_{t \geq 0}$ in the sense 
$$
	S(t) \mathscr{E} \subset \mathscr{E} \quad  \text{for all}\;\; t \geq 0.
$$

3) $\mathscr{E}$ attracts all the solution trajectories exponentially with a uniform rate $\sigma > 0$ in the sense that for any given bounded set $B \subset X$ there is a constant $C(B) > 0$ and
$$
	\text{dist}_X (S(t)B, \mathscr{E}) \leq C(B) e^{- \sigma t}, \quad  t \geq 0.
$$
\end{definition}

If there exists an exponential attractor $\mathscr{E}$ (may not be unique) as well as a global attractor $\mathscr{A}$ for a semiflow in a Banach space $X$, then it is always true that 
$$
	\mathscr{A} \subset \mathscr{E}.
$$
Consequently, the global attractor must have a finite fractal dimension as a subset of the exponential attractor. 

There are two approaches in terms of  sufficient conditions for construction of an exponential attractor. The first approach is the squeezing property which was introduced in the book \cite{Eden} and expounded in \cite{Milani}. The second approach is the compact smoothing property introduced by Efendiev-Miranville-Zelik \cite {EMZ, EYY}. Conceptually, the two properties are essentially equivalent when the phase space is a Hilbert space. From the application viewpoint, the squeezing property fits more to the semilinear reaction-diffusion equations. The second approach has been exploited in proving the existence of exponential attractors for quasilinear reaction-diffusion systems \cite{Yagi}.

The following definition of squeezing property \cite{KL, Milani} for a mapping means that either the mapping (which can be s snapshot of a semiflow at any time $t^*$) is a contraction or that higher modes are dominated by lower modes. 

\begin{definition}[Sqeezing Property] \label{SQP}
	Let $H$ be a Hilbert space and $\{S(t)\}_{t\geq 0}$ be a semiflow on $H$ whose norm is $\|\cdot \|$. Let $S = S(t^*)$ for some fixed $t^* \in (0, \infty)$. 
If there is a positively invariant set $Z \subset H$ with respect to this semiflow and there is a constant $ 0 < \delta < 1$ and an orthogonal projection $P$ from $H$ onto a finite-dimensional subspace of $PH \subset H$ , such that either
	
$$
	\|Su - Sv\| \leq \delta \|u - v \|, \quad \text{for any} \; u, v \in Z,
$$
or
$$
	\|(I - P) (Su - Sv)\| \leq \|P (Su - Sv)\|,  \quad \text{for any} \; u, v \in Z,
$$
then we say that the mapping $S$ has the squeezing property and the affiliated semiflow $\{S(t)\}_{t\geq 0}$ has the squeezing property on the set $Z$. 
\end{definition}

\begin{definition}[Fractal Dimension]
     The fractional dimension of a bounded subset $\mathscr{M}$ in a Banach space is defined by
$$
	\text{dim}_f \, \mathscr{M} = \limsup_{\ve \to 0^+} \frac{\log N_{\ve} [\mathscr{M}]}{\log (1/\ve)}
$$
where $N_\ve [\mathscr{M}]$ is the infimum number of open balls with the radius $\ve$ for a covering of the set $\mathscr{M}$. 
\end{definition}
                        
The following theorem states sufficient conditions for the existence of an exponential attractor with respect to a semiflow in a Hilbert space. Its proof is seen in \cite[Theorems 4.4 and 4.5]{Milani}.

\begin{theorem} \label{ExpAr}
Let $\{S(t)\}_{t \geq 0}$ be a semiflow on a Hilbert space $H$ with the following conditions satisfied\textup{:}

\textup{1)} The squeezing property is satisfied for $S = S(t^*)$ at some $t^* > 0$ on a nonempty, 

\hspace{10pt} compact, positively invariant, and absorbing set $M \subset H$.

\textup{2)} For all $t \in [0, t^*]$, the mapping $S(t): M \to M$ is Lipschitz continuous and the 

\hspace{10pt} Lipschitz constant $K(t): [0, t^*] \to (0, \infty)$ is a bounded function.

\textup{3)} For any $g \in M$, the mapping $S(\cdot) g: [0, t^*] \to M$ is Lipschitz continuous and 

\hspace{10pt} the Lipschitz constant $L(g): M \to (0, \infty)$ is a bounded function.

\noindent Then there exists an exponential attractor $\mathscr{E}$ in the space $H$ for this semiflow. Moreover, for any $\theta \in (0, 1)$, the fractal dimension of the exponential attractor $\mathscr{E}$ has the estimate
\beq \label{dimF}
	\dim_F (\mathscr{E}) \leq N \max \left\{1, \, \frac{\log (2\sqrt{2}L/\theta + 1)}{- \log \theta}\right\}
\eeq
where $N$ is the rank of the spectral projection associated with the squeezing property of the mapping $S(t^*)$ and $L$ is the Lipschitz constant of the mapping $S(t^*)$ on the positively invariant absorbing set $M$. 
\end{theorem}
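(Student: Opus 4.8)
The plan is to convert the problem into one about the discrete dynamical system generated by the single map $S := S(t^*)$ on the compact, positively invariant, absorbing set $M$, to construct there a \emph{discrete} exponential attractor using the squeezing property, and finally to recover a continuous-time exponential attractor by thickening along the flow direction. Write $N := \operatorname{rank} P$ for the rank of the spectral projection $P$ in Definition \ref{SQP}, write $\delta$ for the contraction constant there, let $L := K(t^*)$ be the Lipschitz constant of $S$ on $M$ from hypothesis 2), and let $\Lambda := \sup_{g\in M} L(g)$ be the bound on the orbit-map Lipschitz constants from hypothesis 3).

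The first step is a covering estimate extracted from the squeezing dichotomy: for every $x \in M$, every $R > 0$ and every $\theta \in (0,1)$, the image $S(\overline B(x,R)\cap M)$ is covered by $N_\theta := \big(\lfloor 2\sqrt2\, L/\theta \rfloor + 1\big)^{N}$ balls of radius $\theta R$. If $S$ is a $\delta$-contraction on $M$ — the first alternative of Definition \ref{SQP} — this is trivial once $\theta \ge \delta$, and moreover $S$ has a unique fixed point onto which $S^n M$ collapses, the degenerate branch that ultimately produces the ``$1$'' inside the maximum of \eqref{dimF}. Otherwise the modes-domination alternative holds on $M$; then for any $u, v \in \overline B(x,R)\cap M$ we have $\|P(Su-Sv)\| \le \|Su-Sv\| \le 2LR$, so the $P$-projection of $S(\overline B(x,R)\cap M)$ lies in a ball of radius $2LR$ inside the $N$-dimensional space $PH$ and can be covered by at most $N_\theta$ balls of radius $\theta R/\sqrt2$; pulling these back, using $\|(I-P)(Su-Sv)\| \le \|P(Su-Sv)\|$ together with $\|Su-Sv\|^2 = \|P(Su-Sv)\|^2 + \|(I-P)(Su-Sv)\|^2$, shows each preimage has diameter at most $\theta R$, proving the claim.

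The second step builds the discrete exponential attractor. Starting from a finite $R_0$-net $E_0$ of the compact set $M$ with $R_0 := \operatorname{diam} M$, the covering estimate lets me construct inductively finite sets $E_{n}$ that are $\theta^{n}R_0$-nets of $S^{n}M$ with $\# E_{n+1} \le N_\theta\,\# E_n$: given $E_n$, for each $y \in E_n$ cover $S(\overline B(y,\theta^n R_0)\cap M)$ by $N_\theta$ balls of radius $\theta^{n+1}R_0$ and collect the centres; because $S^n M \subset M$ and $E_n$ is a $\theta^n R_0$-net of $S^n M$, the resulting finite set $E_{n+1}$ is a $\theta^{n+1}R_0$-net of $S^{n+1}M$. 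Put $E_* := \bigcup_{k\ge 0} E_k$ and
$$
	\mathscr{E}_* := \mathscr{A} \;\cup\; \overline{\bigcup_{n\ge 0} S^n(E_*)},
$$
where $\mathscr A = \bigcap_{s\ge0}\overline{\bigcup_{t\ge s}S(t)M}$ is the global attractor. One then checks: $\mathscr E_*$ is compact, being the union of two closed subsets of the compact set $M$; $S\mathscr E_* \subset \mathscr E_*$, since $S\mathscr A = \mathscr A$ and $S$ merely advances the index $n$; $\dist_H(S^n M,\mathscr E_*) \le \theta^n R_0$ from the nets; and a scale-by-scale count of $\varepsilon$-coverings — bounding $\#E_k \le N_\theta^{\,k}\#E_0$ for $k$ up to $\log_{1/\theta}(R_0/\varepsilon)$ and, for larger $k$, using that $S^k M$ already lies in a $\theta^k R_0$-neighbourhood of the (finite-dimensionally coverable, by the same estimate) global attractor $\mathscr A$ — yields $\dim_F \mathscr E_* \le \log N_\theta/\log(1/\theta)$.

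The final step returns to continuous time by setting $\mathscr E := \bigcup_{0\le t\le t^*} S(t)\mathscr E_*$. Hypothesis 2) (uniform Lipschitz continuity of $S(t)$ on $M$, constant $K(t)$) makes $\mathscr E$ compact, and together with hypothesis 3) (Lipschitz continuity of the orbit maps $S(\cdot)g$), so that $(t,g)\mapsto S(t)g$ is jointly Lipschitz on $[0,t^*]\times M$, gives $\dim_F \mathscr E \le \dim_F \mathscr E_* + 1$; positive invariance of $\mathscr E$ under the whole semiflow follows from $S\mathscr E_*\subset\mathscr E_*$ and the semigroup law. For the attraction rate: given a bounded $B\subset H$, $g_0\in B$, absorbed into $M$ after time $T_B$, write $t - T_B = n t^* + \tau$ with $\tau\in[0,t^*)$ and estimate $\dist_H(S(t)g_0,\mathscr E) \le \dist_H\big(S(\tau)S^n(S(T_B)g_0),\,S(\tau)\mathscr E_*\big) \le K(\tau)\,\theta^n R_0$, which decays like $C(B)e^{-\sigma t}$ with $\sigma = -(\log\theta)/t^*$. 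Incorporating the contraction branch (where $\mathscr E$ is a single orbit segment of fractal dimension $\le 1 \le N$) and optimising over $\theta\in(0,1)$ packages the bound into the form \eqref{dimF}, as in \cite{Milani}. The step I expect to be the real obstacle is the fractal-dimension estimate for $\mathscr E_*$: the union $\bigcup_n S^n(E_*)$ is infinite and $\sum_k \# E_k$ diverges, so one must split the $\varepsilon$-count at the scale $k\approx\log_{1/\theta}(R_0/\varepsilon)$ and control the tail through the exponential collapse of $S^k M$ onto $\mathscr A$; everything else is bookkeeping with the Lipschitz constants supplied by hypotheses 2) and 3).
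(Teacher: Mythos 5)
First, a point of reference: the paper does not actually prove Theorem \ref{ExpAr} --- it quotes the statement and refers to \cite[Theorems 4.4 and 4.5]{Milani} (the construction originating in \cite{Eden}) for the proof. Your proposal reconstructs exactly that argument: covering lemma from the squeezing dichotomy, iterated nets $E_k$ of $S^kM$, the discrete set $\mathscr{A}\cup\overline{\bigcup_{n}S^n(E_*)}$, and thickening by $\bigcup_{0\le t\le t^*}S(t)(\cdot)$. So in structure it is the same proof the paper relies on, and your Steps 1 and 3 are sound up to bookkeeping of constants (for instance, with balls of radius $\theta R/\sqrt2$ in $PH$ your pulled-back pieces have diameter $2\theta R$ rather than $\theta R$; this only perturbs $N_\theta$ and hence the constant inside the logarithm in \eqref{dimF}).

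The one step that is wrong as you justify it is the tail of the fractal-dimension count for $\mathscr{E}_*$ --- precisely the step you flagged as the obstacle. You propose to control the contribution of the sets $S^jE_k$ with large indices ``using that $S^kM$ already lies in a $\theta^kR_0$-neighbourhood of\ \dots\ $\mathscr{A}$.'' That is not available: the construction gives $\dist_H(S^kM,E_k)\le\theta^kR_0$ with $E_k$ a net that need not be anywhere near $\mathscr{A}$, and exponential convergence of $S^kM$ to the global attractor is essentially the conclusion of the theorem, not an input (a global attractor may attract arbitrarily slowly). The correct tail bound is more elementary and uses only objects you have already built: since $S^jE_k\subset S^{j+k}M$ and $S^{m}M\subset S^{n}M$ for $m\ge n$ (positive invariance of $M$), every piece with $j+k\ge n$, together with $\mathscr{A}=S^n\mathscr{A}\subset S^nM$, lies inside $S^nM$, which by construction is covered by the $\#E_n\le N_\theta^{\,n}\,\#E_0$ balls of radius $\theta^nR_0$ centred at $E_n\subset\mathscr{E}_*$; the head $j+k<n$ contributes at most $\sum_{j+k<n}\#E_k\le n^2N_\theta^{\,n}\,\#E_0$ points. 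Hence $N_{\theta^nR_0}[\mathscr{E}_*]\lesssim n^2N_\theta^{\,n}$ and $\dim_F\mathscr{E}_*\le\log N_\theta/\log(1/\theta)$, which is the bound you want. With that replacement (and the constant bookkeeping above) your proposal goes through.
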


\section{\textbf{Squeezing Property for Reaction-Diffusion Systems}}

The approach to proving the squeezing property for an evolutionary PDE is to study the difference of two solutions, $w(t) = g(t) - h(t)$, and conduct estimates to bound the time derivatives of the lower and higher modes, $d\|Pw\|/dt$ and $d\|Qw\|/dt$.

Consider a general system of reaction-diffusion equations in the form of an evolutionary equation on a real Hilbert space $H= L^2 (\gw, \mathbb{R}^d)$, where the higher dimensional $\gw \subset \mathbb{R}^d \,(d \geq 3)$, is a bounded Lipschitz domain,
\beq \label{ea00}
	\frac{dg}{dt} + \mathcal{A} g = f(g)
\eeq
where $f \in C^1 (\mathbb{R}^d, \mathbb{R}^d)$ is a nonlinear vector function and the differential operator $\mathcal{A}: \mathscr{D} (\mathcal{A}) \to H$ is a densely defined, nonnegative self-adjoint operator with compact resolvent so that its spectrum consists of a nonnegative sequence of the eigenvalues $\{\lambda_m\}$ with finite multiplicities and $\lambda_m \to \infty$ as $m \to \infty$.

Assume that the weak solution $g(t)$ of the evolutionary equation \eqref{ea00} exists on the time interval $[0, \infty)$ for any initial data $g_0 \in H$, such that
\beq \label{solg}
	g \in C([0, \infty), H) \cap L^2_{loc} ([0, \infty), E)
\eeq
where $E = H^1 (\gw, \mathbb{R}^d)$ whose norm is defined by $\|u\|^2_E = \|\nb u\|^2 + \| u \|^2$. Suppose that there exists a positively invariant, closed and bounded set $M \subset E$ for the solution semiflow such that
\beq \label{ea0}
	\|f(g) - f(\tg)\|_{H}  \leq C \|g - \tg\|_{E}, \quad \text{for any}\;\, g, \tg \in M,
\eeq
where the positive Lipschitz constant $C = C(M) > 0$, and 
\beq \bl{mp}
	\langle f(g) - f(\widetilde{g}), \, g - \widetilde{g} \rangle_H \leq C^* \|g - \widetilde{g}\|_H^2, \quad  \text{for any}\;\,g, \tg \in M,
\eeq
where $C^* > 0$ is a constant independent of $M$.

Let the complete set of the orthonormal eigenvectors of $\mathcal{A} : \mathscr{D}(\mathcal{A}) \rightarrow H$ associated with the eigenvalues $\{\lambda_i\}$ (each repeated to the respective multiplicity) be $\{e_i\}$,  $\mathcal{A} e_i = \lambda_i e_i$ and $ \lambda_i \leq \lambda_{i+1} \rightarrow \infty$. Let $P_m : H \rightarrow \text{Span} \; \{e_1,..., e_m\}$ and $Q_m = I - P_m$ be the orthogonal spectral projections. Then 
\begin{align*}
	\|p\|_E &= \left(\sum_{k = 1}^{m}\, |\inpt{p, e_k}|^2 \lambda_k \right)^{\frac{1}{2}} \leq \left(\lambda_m^{\frac{1}{2}} + 1\right) \|p\|_H, \quad \quad  p \in PH,  \\
	\|q\|_E &= \left(\sum_{k = m + 1}^{\infty} |\inpt{q, e_k}|^2 \lambda_k \right)^{\frac{1}{2}} \leq \left(\lambda_{m + 1}^{\frac{1}{2}} + 1\right) \|q\|_H, \quad  q \in QH, 
\end{align*}
where we briefly write $P = P_m$ and $Q = Q_m = I - P_m$.

We now prove a theorem on the squeezing property for the abstract reaction-diffusion equation \eqref{ea00} on a higher dimensional bounded domain. 

\begin{theorem} \label{SP}
	Under the assumptions \eqref{solg}, \eqref{ea0} and \eqref{mp}, there exists an integer $m \geq 1$ sufficiently large such that the squeezing property is satisfied on the compact, positively invariant and bounded set $M \subset H$  with respect to the projection mapping $P = P_m$ for the solution semiflow of the reaction-diffusion system \eqref{ea00}. 
	
\begin{proof}
		For two solutions $g(t)$ and $h(t)$ of \eqref{ea00} in the positively invariant set $M$, 
the difference $ \xi (t) = g(t)  - h(t)$ satisfies the equation
		\begin{equation}\label{ea1}
		\frac{d\xi}{dt} + \mathcal{A} \xi = f(g) - f(h), \quad t\geq 0.
		\end{equation}
Write $p(t) = P \xi (t)$ and $q(t) = Q \xi (t)$ so that $\xi (t) = p(t) + q(t)$ is an orthogonal decomposition of $\xi (t)$. Note that the closed and bounded set $M \subset E$ in the assumptions \eqref{ea0} and \eqref{mp} is a compact set in the space $H$. 
		
	Step 1. Take $L^2$ inner-product $\inpt{\eqref{ea1}, p(t)}$ and note that $\mathcal{A} P = P \mathcal{A}$ on $\mathscr{D}(\mathcal{A})$ and $P^2 = P$. We have
			\begin{equation*}
			\frac{1}{2} \frac{d}{dt} \|p(t)\|^2 + \|\nb p\|^2 = \inpt{f(g) - f(h), p} \geq - C \| \xi \|_E \|p\| \geq - C(\lambda_m^{\frac{1}{2}} + 1) \|\xi \| \|p\|
			\end{equation*}
			due to the Lipschitz condition \eqref{ea0}.
			Then
			\beq \label{eam2}
			\begin{split}
				\frac{1}{2} \frac{d}{dt} \|p(t)\|^2 &\geq - \lambda_m \|p\|^2 - C(\lambda_m^{\frac{1}{2}} + 1) (\|p\| + \|q\|) \|p\|\\
				& = -(\lambda_m + C (\lambda_m^{\frac{1}{2}} + 1)) \|p\|^2 - C(\lambda_m^{\frac{1}{2}} + 1) \|p\| \|q\|.
			\end{split}
			\eeq
			On the other side, we take the inner product $\inpt{\eqref{ea1}, q(t)}$ and obtain
			\beq \label{eam3}
			\begin{split}
				\frac{1}{2} \frac{d}{dt} \|q(t)\|^2 &\leq - \lambda_{m + 1} \|q\|^2 + C(\lambda_m^{\frac{1}{2}} + 1) (\|p\| + \|q\|) \|q\|\\
				&\leq -(\lambda_m - C(\lambda_m^{\frac{1}{2}} + 1)) \|q\|^2 + C(\lambda_m^{\frac{1}{2}} + 1) \|p\| \|q\|.
			\end{split}
			\eeq
			We choose $m$ sufficiently large such that 
			\beq \label{eam4}
			\lambda_m - C(\lambda_m^{\frac{1}{2}} + 1) > 2 C(\lambda_m^{\frac{1}{2}} + 1),
			\eeq
			
	Let $S = S(1)$ for $t^* = 1$. Then either 
	$$
		\|(I - P)(Sg - Sh)\| \leq \|P(Sg - Sh)\|, \quad \text{i.e.} \quad  \|q(1)\| \leq \|p(1)\|, 
	$$
	or otherwise
			\beq \label{eam5}
			\|(I - P) \xi (1) \| = \|Q\xi (1)\| > \|P\xi (1)\|, \quad \text{i.e.} \quad \|q(1)\| > \|p(1)\|.
			\eeq
	Below we consider the case that \eqref{eam5} occurs. By the choice \eqref{eam4}, we have
	$$
			(\lambda_m - C(\lambda_m^{\frac{1}{2}} + 1)) \|Q\xi (1)\| > 2 C(\lambda_m^{\frac{1}{2}} + 1) \|P\xi (1)\|.
	$$
	Namely,    
			\beq \label{eam6}
			(\lambda_m - C(\lambda_m^{\frac{1}{2}} + 1)) \|q (1)\| > 2 C(\lambda_m^{\frac{1}{2}} + 1) \|p (1)\|.
			\eeq
	The continuity of $\xi (t)$ in $H$ implies that the strict inequality as above holds for $t$ in a small neighborhood of $t^* = 1$. There are two possibilities to be considered.
			
	Step 2. The first possibility is that 
	\beq \bl{pqt}
		(\lambda_m - C(\lambda_m^{\frac{1}{2}} + 1)) \|q (t)\| > 2 C(\lambda_m^{\frac{1}{2}} + 1) \|p (t)\|
	\eeq
	holds for all $t \in \left[\frac{1}{2}, 1 \right]$. Then 
	\begin{equation} \bl{m6}
		\begin{split}
		&(\lambda_m - C(\lambda_m^{\frac{1}{2}} + 1)) \|q(t)\| - C(\lambda_m^{\frac{1}{2}} + 1) \|p(t)\|   \\
		> \frac{1}{2} (\lambda_m &- C(\lambda_m^{\frac{1}{2}} + 1)) \|q(t)\| > \frac{\lambda_m}{3} \|q(t)\|, \quad \text{for} \;\;  t \in [1/2, 1], 
		\end{split}
	\end{equation}
			where we used \eqref{eam6} in the first inequality and \eqref{eam4} in the second inequality of \eqref{m6}. Then \eqref{eam3} becomes
			$$
			\frac{d}{dt} \|q\|^2 \leq - \, \frac{2}{3}\, \lambda_m \|q\|^2, \quad t \in [1/2, 1].
			$$
			Integrating this inequality over the time interval $[\frac{1}{2}, 1]$, we obtain 
			$$
			 \|q(1)\|^2 \leq e^{-\lambda_m/3}\, \|q(1/2)\|^2.
			$$
			Since $\|\xi (1)\|^2 = \|p(1)\|^2 + \|q (1)|^2 \leq 2 \|q(1)\|^2$ due to \eqref{eam5}, it infers that 
			\beq \bl{ie12} 
			\|\xi (1)\| \leq \sqrt{2} \,\|q(1)\| \leq \sqrt{2}\, e^{- \lambda_m/6} \|q(1/2)\| \leq \sqrt{2} e^{- \lambda_m/3} \|\xi (1/2)\|.
			\eeq
			
	On the other hand, taking the inner product $\inpt{\eqref{ea1}, \xi (t)}$ and using the monotone property \eqref{mp}, we can get
			\begin{equation*}
			\frac{1}{2} \frac{d}{dt} \|\xi \|^2 \leq \frac{d}{dt} \|\xi \|^2 + \|\nb \xi \|^2 \leq \inpt{f(g) - f(h), g - h} \leq C^*\|g - h\|^2 = C^* \|\xi \|^2.
			\end{equation*}
	Integrate the above inequality over the time interval $[0, t]$, we get
			\beq \label{ea7}
			\|g(t) - h(t)\| \leq e^{C^* t} \|g_0 - h_0\|, \quad \text{for any} \;\;  t \geq 0.
			\eeq
			It yields, in particular, 
			\beq \bl{ie20} 
			\|\xi (1/2)\| \leq e^{C^* /2} \|\xi (0)\|,
			\eeq
	Then \eqref{ie12} and \eqref{ie20} give rise to the inequality
			\beq \label{ea8}
			\|S g_0 - S h_0\| = \|S(1)g_0 - S(1) h_0\| = \|\xi (1)\| \leq \delta \|\xi (0)\| = \delta \|g_0 - h_0\|
			\eeq
			with 
			\beq \bl{dlt}
			0 < \delta = \sqrt{2}\, e^{- \lambda_m/6} \, e^{C^* /2}  < 1
			\eeq
		        provided that $m$ is large enough so that $\lambda_m$ is large enough. Thus it is proved that for this first possibility the squeezing property is satisfied by the mapping $S$ and by the solution semiflow 				$\{S(t)\}_{t \geq 0}$ of the reaction-diffusion system \eqref{ea00}.
			
	Step 3. The second possibility is that \eqref{pqt} does not hold for all $t \in [1/2, 1]$. Then there is a time $\frac{1}{2} < t_0 < 1$ such that \eqref{pqt} is valid for $t \in (t_0, 1]$ and
			\beq \label{ea9}
			(\lambda_m - C(\lambda_m^{\frac{1}{2}} + 1)) \|q (t_0)\| = 2 C(\lambda_m^{\frac{1}{2}} + 1) \|p (t_0)\|.
			\eeq
			Define a function
			\beq \label{ea10}
			\Phi (t) = (\|p(t)\| + \|q(t)\|) \; \text{exp} \left(\frac{\lambda_m \|q(t)\|}{C_m (\|p(t)\| + \|q(t)\|)}\right)
			\eeq
			where $C_m = C (\lambda_m^{\frac{1}{2}} + 1)$. From \eqref{eam2} and \eqref{eam3}, since $\frac{1}{2} \frac{d}{dt} \|p(t)\|^2 = \|p(t)\| \frac{d}{dt} \|p(t)\|$ and similarly for $\|q(t)\|$, we have
			\begin{gather*}
			\frac{d}{dt} \|p\| \geq -(\lambda_m + C(\lambda_m^{\frac{1}{2}} + 1))\|p\| - C (\lambda_m^{\frac{1}{2}} + 1)\|q\|,  \\
			\frac{d}{dt} \|q\| \leq -(\lambda_m - C(\lambda_m^{\frac{1}{2}} + 1)) \|q\| + C (\lambda_m^{\frac{1}{2}} + 1)\|p\|.
			\end{gather*} 
			Then
			\beq \label{ea10a}
			\frac{d}{dt} \Phi(t) = \exp \left[\frac{\lambda_m \|q\|}{C_m (\|p\| + \|q\|)} \right] \left[\frac{d}{dt} (\|p\| + \|q\|) + (\|p\| + \|q\|) \frac{d}{dt}\left(\frac{\lambda_m \|q\|}{C_m (\|p\| + \|q\|)} \right)\right].
			\eeq
			Since the exponential factor is positive, in order to know the sign of the derivative $\frac{d}{dt} \Phi (t)$, we only need to estimate the second factor on the right side of \eqref{ea10a}:
			\begin{equation} \bl{dph}
			\begin{split}
			& \frac{d}{dt} (\|p\| + \|q\|) + (\|p\| + \|q\|)\, \frac{d}{dt} \left(\frac{\lambda_m \|q\|}{C_m (\|p\| + \|q\|)}\right) \\
			=  &\,\frac{d}{dt} \|p\| + \frac{d}{dt} \|q\| + \frac{\lambda_m}{C_m} \frac{d}{dt} \|q\| - \frac{\lambda_m \|q\|}{C_m (\|p\| + \|q\|)} \left(\frac{d}{dt} \|p\| + \frac{d}{dt}\|q\|\right)\\
			=  &\, \frac{d}{dt}\|q\|\left[1 + \frac{\lambda_m}{C_m} - \frac{\lambda_m \|q\|}{C_m (\|p\| + \|q\|)}\right] - \frac{d}{dt} \|p\| \left[\frac{\lambda_m \|q\|}{C_m (\|p\| + \|q\|)} -1 \right]\\
			=  &\, \frac{d}{dt}\|q\| \left[1 + \frac{\lambda_m \|p\|}{C_m (\|p\| + \|q\|)}\right] - \frac{d}{dt} \|p\| \left[\frac{\lambda_m \|q\|}{C_m (\|p\| + \|q\|)} - 1 \right]\\
			\leq &\, ( -(\lambda_m - C_m) \|q\| + C_m \|p\|) \left[1 + \frac{\lambda_m \|p\|}{C_m (\|p\| + \|q\|)}\right]\\
			 +  &\, ((\lambda_m + C_m) \|p\| + C_m \|q\|) \left[\frac{\lambda_m \|q\|}{C_m (\|p\| + \|q\|)} -1 \right]\\
			=  &\, -(\lambda_m - C_m) \|q\| - \frac{\lambda_m (\lambda_m - C_m) \|p| \|q\|}{C_m (\|p\| + \|q\|)} + C_m \|p\| + \frac{\lambda_m \|p\|^2}{\|p\| + \|q\|} \\
			+ &\, \frac{\lambda_m (\lambda_m + C_m) \|p \| \|q\|}{C_m (\|p\| + \|q\|)} - (\lambda_m + C_m) \|p\| + \frac{\lambda_m \|q\|^2}{\|p\| + \|q\|} - C_m \|q\|\\
			=  &\, -\lambda_m \|q\| - \lambda_m \|p\| + \frac{2 \lambda_m \|p\| \|q\|}{\|p\| + \|q\|} + \frac{\lambda_m \|p\|^2}{\|p\| + \|q\|} + \frac{\lambda_m \|q\|^2}{\|p\| + \|q\|}\\
			=  &\, - \lambda_m \|q\| - \lambda_m \|p\| + \frac{\lambda_m (\|p\|^2 + \|q\|^2 + 2\|p\| \|q\|)}{\|p\| + \|q\|} = 0.
			\end{split}
			\end{equation}	
			Hence we obtain
			$$ \frac{d}{dt} \, \Phi(t) \leq 0, \quad \text{for} \; t \in [t_0,1].$$
			It follows that 
			\beq \label{ea11}
			\Phi(1) \leq \Phi(t_0).
			\eeq
			
	At $t = 1, \|q(1)\| = \|Q \xi (1)\| > \|P \xi (1)\| = \|p(1) \|$ by \eqref{eam5}. Then from \eqref{ea10} we see that  
			\beq \label{ea12}
			\Phi(1) \geq \|q(1)\| \; \text{exp} \left(\frac{\lambda_m \|q(1)\|}{2 C_m \|q(1)\|} \right) = \|q(1)\| e^{\lambda_m/(2 C_m)}
			\eeq
			At $t = t_0$, \eqref{ea9} indicates that
			$$ (\lambda_m - C_m) \|q(t_0)\| = 2 C_m \|p(t_0)\|$$
			and then
			$$ 2 C_m (\|p(t_0)\| + \|q(t_0)\|) = (\lambda_m + C_m) \|q(t_0)\|.$$
			Thus,
			\beq \label{ea13}
			\Phi(t_0) = \frac{\lambda_m + C_m}{2 C_m}\, \|q(t_0)\| \, \exp \left(\frac{2 \lambda_m}{\lambda_m + C_m} \right).
			\eeq
	Note that $t_0 \in (1/2, 1]$. Put together \eqref{ea11}, \eqref{ea12} and \eqref{ea13}. We use the Lipschitz continuous dependence on initial data to obtain
			\begin{equation} \bl{qxi}
			\begin{split}
			\|q(1)\| &\leq \exp \left( - \frac{\lambda_m}{2 C_m}\right) \Phi(1) \leq \exp \left( - \frac{\lambda_m}{2 C_m}\right) \Phi(t_0)  \\
			&\leq \exp \left( - \frac{\lambda_m}{2 C_m}\right) \frac{\lambda_m + C_m}{2 C_m} \exp \left(\frac{2 \lambda_m}{\lambda_m + C_m} \right) \|q (t_0)\|.  \\
			&\leq \exp \left( - \frac{\lambda_m}{2 C_m}\right) \frac{\lambda_m + C_m}{2 C_m} \, e^2 \, \|q (t_0)\|   \\
			&\leq \exp \left( - \frac{\lambda_m}{2 C_m}\right) \frac{\lambda_m + C_m}{2 C_m} \; e^2 \, \|\xi (t_0)\|.
			\end{split}
			\end{equation}
	According to the solution expression of the evolutionary equation \eqref{ea1},
			$$
			\xi (t) = e^{- \mathcal{A} t} \xi (0)  +  \int_0^t e^{- \mathcal{A} (t-s)} (f(g(s)) - f(h(s)))\, ds \quad t \geq 0,
			$$
			By using the Lipschitz condition \eqref{ea0} and the fact that $e^{-\mathcal{A} t}$ is a contraction semigroup, we can deduce that 
			\beq \bl{xit}
				\begin{split}
				&\| \xi (t) \| \leq \|e^{- \mathcal{A} t}\|_{\mathcal{L}(H)} \|\xi (0)\| + \int_0^t \|e^{- \mathcal{A}(t-s)}\|_{\mathcal{L}(H)} \|f(g(s)) - f(h(s))\|\, ds \\
				\leq &\, \|\xi (0)\| + \int_0^t C \|g(s) - h(s)\|_E\, ds \leq \|\xi (0)\| + \int_0^t C \| \xi (s)\|_E\, ds, \quad t \geq 0.
				\end{split}
			\eeq
			Then the Gronwall inequality applied to \eqref{xit} shows that
			
			$$
				\|\xi (t)\| \leq \|\xi (0) \|\, e^{Ct},  \quad t \geq0.
			$$
			
			\vspace{5pt}
			Substitute this inequality at $t_0$ into \eqref{qxi} to obtain
			\begin{equation*} 
			\begin{split}
			\|q(1) \| &\leq \exp \left( - \, \frac{\lambda_m}{2 C_m}\right) \frac{\lambda_m + C_m}{2 C_m} \; e^2 \, \|\xi (t_0)\|  \\[5pt]
			&\leq \exp \left( - \, \frac{\lambda_m}{2 C_m}\right) \frac{\lambda_m + C_m}{2 C_m} \; e^{2 + C}\, \|\xi (0)\|.
			\end{split}
			\end{equation*}
			Since $\|p(1)\| < \|q(1)\|$, we end up with
			$$ 
			\|\xi (1)\| \leq \sqrt{2} \, \exp \left( - \, \frac{\lambda_m}{2 C_m}\right) \frac{\lambda_m + C_m}{2 C_m} \; e^{2 + C} \|\xi (0)\|. 
			$$
			For $m$ sufficiently large, we can assert 
			\begin{equation*}
			\begin{split} 
			0 < \delta &= \sqrt{2} \, \exp \left(- \, \frac{\lambda_m}{2 C_m}\right) \frac{\lambda_m + C_m}{2 C_m} \; e^{2 + 2C}   \\
		& = \sqrt{2} \, \left(\frac{\lambda_m}{2 C(\lambda_m^{\frac{1}{2}} + 1)} + \frac{1}{2}\right) \exp \left(- \,\frac{\lambda_m}{2 C(\lambda_m^{\frac{1}{2}} + 1)}\right) e^{2 + 2C} < 1.
			\end{split}
			\end{equation*}
			We have proved that 
			\beq \label{ea14}
			\|Sg_0 - Sh_0\| = \|\xi (1)\| \leq \delta \|\xi (0)\| = \delta \|g_0 - h_0\|, \quad \text{for any} \;\, g_0, h_0 \in M.
			\eeq
			
Finally \eqref{ea8} and \eqref{ea14} show that, in any case as we have treated in Step 2 and Step 3, if the spectral number $m$ of the finite-rank orthogonal projection $P_m$ on the space $H$ is chosen to be large enough, then the squeezing property holds for the Hindmarsh-Rose semiflow $\{S(t)\}_{t \geq 0}$ generated by the equation \eqref{ea00} on this compact, positively invariant and bounded set $M \subset H$. The proof is completed. 				
	\end{proof}		
\end{theorem}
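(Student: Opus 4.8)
The plan is to work with the difference $\xi(t) = g(t) - h(t)$ of two solutions lying in the set $M$ (which, being closed and bounded in $E$, is compact in $H$ by Rellich), and to split it orthogonally as $\xi = p + q$ with $p = P_m\xi$, $q = Q_m\xi$. Testing the difference equation $d\xi/dt + \mathcal{A}\xi = f(g) - f(h)$ against $p$ and against $q$, and using the Lipschitz bound \eqref{ea0}, the spectral comparison of the $H$- and $E$-norms on $P_mH$ and $Q_mH$, and the eigenvalue inequalities $\|\nb p\|^2 \le \lambda_m\|p\|^2$ and $\|\nb q\|^2 \ge \lambda_{m+1}\|q\|^2 \ge \lambda_m\|q\|^2$, I would obtain the coupled differential inequalities
\begin{gather*}
\frac{1}{2}\frac{d}{dt}\|p\|^2 \ge -(\lambda_m + C_m)\|p\|^2 - C_m\|p\|\,\|q\|, \\
\frac{1}{2}\frac{d}{dt}\|q\|^2 \le -(\lambda_m - C_m)\|q\|^2 + C_m\|p\|\,\|q\|,
\end{gather*}
where $C_m = C(\lambda_m^{1/2}+1)$. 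Since $\lambda_m/C_m \to \infty$ as $m \to \infty$, one may fix $m$ so large that the spectral gap dominates the nonlinearity, i.e. $\lambda_m - C_m > 2C_m$; this is essentially the only place where the largeness of $m$ (hence the rank of $P_m$) enters.

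Next I would put $t^* = 1$, $S = S(1)$, and argue by dichotomy at $t = 1$. If $\|q(1)\| \le \|p(1)\|$, i.e. $\|(I - P)(Sg - Sh)\| \le \|P(Sg - Sh)\|$, the second alternative in the squeezing property holds outright. Otherwise $\|q(1)\| > \|p(1)\|$, and the gap condition forces $(\lambda_m - C_m)\|q(1)\| > 2C_m\|p(1)\|$; by continuity of $\xi(\cdot)$ in $H$ this strict cone inequality persists on a left-neighbourhood of $1$. A second dichotomy is then needed: either the cone inequality $(\lambda_m - C_m)\|q(t)\| > 2C_m\|p(t)\|$ holds for all $t \in [1/2, 1]$, or it first fails at some $t_0 \in (1/2, 1)$, where equality holds while the strict inequality remains valid on $(t_0, 1]$.

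In the first sub-case the cone inequality lets me absorb the cross term into the dissipative term, yielding $d\|q\|^2/dt \le -\frac{2}{3}\lambda_m\|q\|^2$ on $[1/2, 1]$, hence $\|q(1)\| \le e^{-\lambda_m/6}\|q(1/2)\|$. Combined with $\|\xi(1)\| \le \sqrt{2}\,\|q(1)\|$ (valid since $\|p(1)\| < \|q(1)\|$) and the elementary $L^2$ bound $\|\xi(t)\| \le e^{C^* t}\|\xi(0)\|$ — obtained by testing the difference equation against $\xi$ and invoking the one-sided estimate \eqref{mp} followed by Gronwall — this produces the contraction $\|Sg_0 - Sh_0\| \le \delta\|g_0 - h_0\|$ with $\delta = \sqrt{2}\,e^{-\lambda_m/6}\,e^{C^*/2} < 1$ for $m$ large, so the first alternative of the squeezing property is satisfied.

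The remaining sub-case is the main obstacle. I would treat it by introducing the Lyapunov-type functional
\[
\Phi(t) = \bigl(\|p(t)\| + \|q(t)\|\bigr)\exp\!\left(\frac{\lambda_m\|q(t)\|}{C_m\bigl(\|p(t)\| + \|q(t)\|\bigr)}\right)
\]
and showing $d\Phi/dt \le 0$ on $[t_0, 1]$: feeding the scalar inequalities for $d\|p\|/dt$ and $d\|q\|/dt$ into the logarithmic derivative of $\Phi$, the bracket multiplying the positive exponential must telescope to $-\lambda_m\|p\| - \lambda_m\|q\| + \lambda_m\bigl(\|p\| + \|q\|\bigr) = 0$ after full expansion and cancellation — verifying this identity is the delicate point of the argument. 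Granting $\Phi(1) \le \Phi(t_0)$, I would combine the lower bound $\Phi(1) \ge \|q(1)\|\,e^{\lambda_m/(2C_m)}$ (from $\|q(1)\| > \|p(1)\|$), the exact value $\Phi(t_0) = \frac{\lambda_m + C_m}{2C_m}\|q(t_0)\|\,e^{2\lambda_m/(\lambda_m + C_m)} \le \frac{\lambda_m + C_m}{2C_m}e^2\,\|\xi(t_0)\|$ (from the equality defining $t_0$), the Gronwall bound $\|\xi(t_0)\| \le e^{C t_0}\|\xi(0)\|$ derived from the mild-solution representation of $\xi$ and \eqref{ea0}, and $\|p(1)\| < \|q(1)\|$ once more, to conclude $\|\xi(1)\| \le \delta\|\xi(0)\|$ with $\delta = \sqrt{2}\,\frac{\lambda_m + C_m}{2C_m}\exp\!\bigl(-\lambda_m/(2C_m)\bigr)e^{2 + 2C}$. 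Since $\lambda_m/C_m \to \infty$, the exponential decay overwhelms the rational prefactor and $\delta < 1$ for $m$ large. Together with the first sub-case and the trivial alternative at $t = 1$, this establishes the squeezing property on $M$ with projection $P_m$ for all sufficiently large $m$.
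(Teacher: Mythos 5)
Your proposal is correct and follows essentially the same route as the paper's own proof: the same spectral splitting $\xi = p+q$ with the coupled differential inequalities, the same gap condition $\lambda_m - C_m > 2C_m$, the same double dichotomy at $t^*=1$ and on $[1/2,1]$, and the same Lyapunov functional $\Phi$ with the telescoping cancellation in the remaining sub-case. No substantive differences to report.
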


\section{\textbf{The Existence of Exponential Attractor}}

In this section, we shall prove the main result on the existence of an exponential attractor for the solution semiflow of the diffusive Hindmarsh-Rose equations. 

We start with the squeezing property stated Theorem \ref{SP} and check its two conditions \eqref{ea0} and \eqref{mp} are satisfied by the Hindmarsh-Rose semiflow.

\begin{lemma} \bl{LpHc}
	Under the same assumptions as in Theorem \ref{MTh}, the Nemytskii operator $f$ defined by \eqref{opf} satisfies the $E$ to $H$ Lipschitz condition
\beq \bl{fHE}
	\|f(g) - f(\tg)\|_H \leq C_E (M) \|g - \tg \|_E, \quad \text{for any} \;\; g, \tg \in M,
\eeq
on any given positively invariant and bounded set $M \subset E$, where $C_E(M) > 0$ is a constant only depending on $M$. Moreover, $f$ satisfies the monotone property that there exists a constant $C^* > 0$ independent of $M$ such that 
\beq \bl{fmn}
	\langle f(g) - f(\tg), g - \tg \rangle \leq C^* \|g - \tg \|^2, \quad   \text{for any} \;\;  g, \, \tg \in M.
\eeq
\end{lemma}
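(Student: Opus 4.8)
The plan is to establish both properties by a direct component-by-component computation, using only the polynomial form of $\vp$ and $\psi$ in \eqref{pp} together with the Sobolev imbedding $H^1(\gw) \hookrightarrow L^6(\gw)$, which is available since the space dimension is $n \le 3$. Throughout, I would write $g = (u_1, v_1, w_1)$, $\tg = (u_2, v_2, w_2)$ in $M$, set $U = u_1 - u_2$, $V = v_1 - v_2$, $W = w_1 - w_2$, and factor the nonlinear differences as $\vp(u_1) - \vp(u_2) = \big(a(u_1 + u_2) - b(u_1^2 + u_1 u_2 + u_2^2)\big)U$ and $\psi(u_1) - \psi(u_2) = -\beta(u_1 + u_2)U$.

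For the Lipschitz estimate \eqref{fHE}, note first that the linear entries of $f(g) - f(\tg)$, namely $V - W$, $-V$ and $qU - rW$, are bounded in $H$ by a fixed multiple of $\|g - \tg\|_H \le \|g - \tg\|_E$. For the nonlinear entries, H\"older's inequality applied to the factored forms gives, for instance, $\|(u_1^2 + u_1 u_2 + u_2^2)U\| \le \big(\|u_1\|_{L^6}^2 + \|u_1\|_{L^6}\|u_2\|_{L^6} + \|u_2\|_{L^6}^2\big)\|U\|_{L^6}$ and $\|(u_1 + u_2)U\| \le \|u_1 + u_2\|_{L^6}\,\|U\|_{L^3}$; since $\|u_i\|_{L^6} \le c\,\|u_i\|_E$ is bounded on $M$ and $\|U\|_{L^6}, \|U\|_{L^3} \le c\,\|U\|_E$, these are bounded by $C_E(M)\|U\|_E \le C_E(M)\|g - \tg\|_E$. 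Collecting the contributions yields \eqref{fHE}, with $C_E(M)$ depending only on $\sup\{\|h\|_E : h \in M\}$ and the parameters.

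For the monotonicity \eqref{fmn}, I would expand
\[ \langle f(g) - f(\tg),\, g - \tg\rangle = \langle \vp(u_1) - \vp(u_2), U\rangle + \langle \psi(u_1) - \psi(u_2), V\rangle + \langle V - W, U\rangle + q\langle U, W\rangle - \|V\|^2 - r\|W\|^2 . \]
The bilinear terms $\langle V - W, U\rangle + q\langle U, W\rangle$ are controlled by $C(\|U\|^2 + \|V\|^2 + \|W\|^2)$ via Young's inequality, and the two negative terms are discarded. The crux is the pair of nonlinear terms, which equals $\int_\gw\big(a(u_1 + u_2) - b(u_1^2 + u_1 u_2 + u_2^2)\big)U^2\,dx - \beta\int_\gw (u_1 + u_2)UV\,dx$. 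Here I split the dissipative integral, writing $b = b_1 + b_2$ with $b_1, b_2 > 0$: using the elementary bound $(u_1 + u_2)^2 \le 2(u_1^2 + u_1 u_2 + u_2^2)$ and Young's inequality, the portion $-b_1\int(u_1^2 + u_1 u_2 + u_2^2)U^2$ absorbs $a\int(u_1 + u_2)U^2$ up to a term $\tfrac{a^2}{2b_1}\|U\|^2$, while the portion $-b_2\int(u_1^2 + u_1 u_2 + u_2^2)U^2$ absorbs $\beta\big|\int(u_1 + u_2)UV\big|$ up to a term $\tfrac{\beta^2}{2b_2}\|V\|^2$. After these cancellations only fixed multiples of $\|U\|^2$ and $\|V\|^2$ remain, so adding back the bilinear contributions gives \eqref{fmn} with $C^*$ depending only on $a, b, \beta, q, r$ — in particular independent of $M$.

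The hard part is exactly this last cancellation. The naive route of bounding the pointwise factor $|u_1 + u_2|$ by $\sup_M\|\cdot\|_{L^\infty}$ is unavailable because $M$ is only bounded in $E$, and even bounding it in a higher $L^p$ would force $C^*$ to depend on $M$, contrary to the claim. The point to recognize is that the \emph{wrong-sign} quadratic term of $\vp$ and the entire $\psi$-coupling term must both be dominated by the single cubic dissipation term, which is why its coefficient has to be partitioned; once this is seen, the rest is routine H\"older and Young bookkeeping.
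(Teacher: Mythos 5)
Your proposal is correct and follows essentially the same route as the paper: the Lipschitz bound \eqref{fHE} via H\"older and the $H^1\hookrightarrow L^6$ imbedding with constants controlled by the $E$-bound of $M$, and the monotone bound \eqref{fmn} by splitting the cubic dissipation from $\vp$ so that one portion absorbs the wrong-sign quadratic term (via Young) and the other absorbs the $\psi$-coupling term, leaving only $\|U\|^2$ and $\|V\|^2$ remainders with $M$-independent coefficients. The paper implements exactly this split as a $\tfrac{b}{4}+\tfrac{b}{4}$ partition of $-\tfrac{b}{2}(u^2+\tu^2)$, with the two $\tfrac{b}{4}\int|u-\tu|^2(u^2+\tu^2)\,dx$ terms cancelling upon summation of the component estimates.
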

\begin{proof}
	First we prove the claim \eqref{fmn}. For any $g = (u, v, w)$ and $\Tilde{g} = (\tu, \tv, \tw)$ in the set $M$ and denote the three components of $f$ by $f_1, f_2, f_3$. For the first component $f_1$, we have
\beq \bl{f1}
	\begin{split}
	&\langle f_1(g) - f_1 (\tg), u - \tu \rangle =  \langle f_1(u, v, w) - f_1 (\tu, \tv, \tw), u - \tu \rangle   \\[7pt]
	\leq &\, \langle \vp(u) - \vp(\tu), u - \tu \rangle + \langle v - \tv, u - \tu \rangle + \langle w - \tw, u - \tu \rangle \\[7pt]
	\leq &\, a \langle u^2 - \tu^2, u - \tu \rangle  - b \langle u^3 - \tu^3, u - \tu \rangle + \|v - \tv \| \|u - \tu \| + \| w - \tw \| \|u - \tu \| \\
	\leq &\, a \int_\Omega |u(x) - \tu (x)|^2 (u(x) + \tu (x))\, dx \\
	- &\, b \int_\Omega |u(x) - \tu (x)|^2 (u^2 (x) + u(x) \tu(x) + \tu^2 (x)) dx  \\[4pt]
	+ &\, \|u - \tu \|^2 + \|v - \tv \|^2 + \|w -\tw \|^2 \\[4pt]
	\leq &\ \frac{2a^2}{b} \int_\Omega |u(x) - \tu (x)|^2 \, dx + \|u - \tu \|^2 + \|v - \tv \|^2 + \|w -\tw \|^2 \\
	- &\, \frac{b}{4} \int_\Omega |u(x) - \tu (x)|^2 (u^2 (x) + \tu^2 (x))\, dx, 
	\end{split}
\eeq
where we used 
$$
	a (u(x) + v(x)) \leq \frac{b}{4} (u^2 (x) + \tu^2 (x)) + \frac{2a^2}{b}.
$$
For the second component $f_2$, we do the estimate
\beq \bl{f2}
	\begin{split}
	&\langle f_2(g) - f_2 (\tg), v - \tv \rangle =  \langle f_2(u, v, w) - f_2 (\tu, \tv, \tw), v - \tv \rangle   \\[7pt]
	\leq &\, \langle \psi (u) - \psi (\tu), v - \tv \rangle + \|v - \tv \|^2 \\
	= &\, \beta \int_\Omega (u(x) - \tu (x)) (u(x) + \tu (x)) (v(x) - \tv (x))\, dx + \|v - \tv \|^2 \\
	\leq &\, \frac{b}{8} \int_\Omega |u(x) - \tu (x)|^2 |u(x) + \tu (x)|^2 \, dx + \frac{2\beta^2}{b} \|v - \tv \|^2 + \|v - \tv \|^2 \\
	\leq &\, \frac{b}{4} \int_\Omega |u(x) - \tu (x)|^2 (u^2(x) + \tu^2 (x)) \, dx + \left(1 + \frac{2\beta^2}{b}\right) \|v - \tv \|^2.
	\end{split}
\eeq
For the third component $f_3$, we have
\beq \bl{f3}
	\begin{split}
	&\langle f_3(g) - f_3 (\tg), w - \tw \rangle =  \langle f_3(u, v, w) - f_3 (\tu, \tv, \tw), w - \tw \rangle \\[3pt]
	\leq &\, q \|u - \tu \| \|w - \tw \| + r \|w - \tw \|^2 \leq q \|u - \tu \|^2 + (q + r) \|w - \tw \|^2.
	\end{split}
\eeq
Summing up \eqref{f1}, \eqref{f2} and \eqref{f3} with two integral terms on the right-hand sides being cancelled out, we obtain
\beq \bl{fub}
	\begin{split}
	&\langle f(g) - f(\tg), g - \tg \rangle = \langle f_1(g) - f_1 (\tg), u - \tu \rangle \\[7pt]
	+ &\, \langle f_2(g) - f_2 (\tg), v - \tv \rangle + \langle f_3(g) - f_3 (\tg), w - \tw \rangle \\
	\leq &\, \left( 1 + \frac{2a^2}{b} \right) \|u - \tu\|^2 + \|v - \tv\|^2 + \|w - \tw \|^2 \\
	+ &\, \left(1 + \frac{2\beta^2}{b}\right) \|v - \tv \|^2 + q \|u - \tu \|^2 + (q + r) \|w - \tw \|^2 \\
	= &\,  \left( 1 + q + \frac{2a^2}{b} \right) \|u - \tu\|^2 +  \left(2 + \frac{2\beta^2}{b}\right) \|v - \tv\|^2 + (1 + q + r) \|w - \tw \|^2 \\
	\leq &\, C^* \, (\|u - \tu\|^2 +  \|v - \tv\|^2 +  \|w - \tw \|^2) = C^* \, \|g - \tg\|^2.
	\end{split}
\eeq
Thus the inequality \eqref{fmn} is satisfied by $f$ on the set $M$ with a uniform constant
\beq \bl{Cp}
	C^* = \max \left\{1 + q + \frac{2a^2}{b}, \, 2 + \frac{2\beta^2}{b}, \, 1 + q + r \right\}.
\eeq

Next we prove the $E$ to $H$ Lipschitz condition \eqref{fHE} of the Nemytskii operator $f$. Due to the Sobolev embedding $E = H^1 (\gw, \mathbb{R}^3) \hookrightarrow L^6 (\gw, \mathbb{R}^3) \hookrightarrow L^4 (\gw, \mathbb{R}^3)$, there are positive constants $\ggd_1$ and $\ggd_2$ such that 
$$
	\|\cdot \|_{L^4 (\Omega)} \leq \ggd_1 \| \cdot \|_{H^1 (\Omega)} \quad \text{and} \quad \|\cdot \|_{L^6 (\Omega)} \leq \ggd_2 \| \cdot \|_{H^1 (\Omega)}. 
$$
Since $M$ is an invariant and bounded set in $E$, we define
$$
	N_1 = \max_{g \in M} \|u\|_{L^4}, \quad  N_2 = \max_{g \in M} \|u\|_{L^6}.
$$ 
Then
\beq \bl{fg1}
	\begin{split}
	\| f(g) - &\, f(\tg) \|^2_H = \| f_1(g) - f_1(\tg) \|^2 + \| f_2(g) - f_2(\tg) \|^2 + \| f_3(g) - f_3(\tg) \|^2 \\[3pt]
	\leq &\, (a\|u^2 - \tu^2\| + b \|u^3 - \tu^3\| + \|v - \tv \| + \|w - \tw \|)^2 \\[3pt]
	+ &\, (\beta \| u^2 -\tu^2\| + \|v - \tv\|)^2 + (q \|u - \tu\| + r \|w - \tw\|)^2 \\[2pt]
	\leq &\, 4(a^2\|u^2 - \tu^2\|^2 + b^2 \|u^3 - \tu^3\|^2 + \|v - \tv \|^2 + \|w - \tw \|^2) \\[3pt]
	+ &\, 2(\beta^2 \| u^2 -\tu^2\|^2 + \|v - \tv\|^2) + 2(q^2 \|u - \tu\|^2 + r^2 \|w - \tw\|^2) \\[3pt]
	= &\, (4a^2 + 2\beta^2) \|u^2 - \tu^2\|^2 + 4b^2 \|u^3 - \tu^3\|^2 + 2q^2 \|u - \tu\|^2 \\[3pt]
	+ &\, 6 \|v - \tv\|^2 + (4 + 2r^2)\|w - \tw \|^2.
	\end{split}
\eeq
Note that H\"{o}lder inequality implies that
\begin{align*}
	\|u^2 - \tu^2\|^2 = &\, \|(u - \tu)(u + \tu)\|^2 = \int_\Omega |u(x) - \tu (x)|^2 |u(x) + \tu (x)|^2 \, dx \\
	\leq &\, \|u - \tu\|_{L^4}^2 \|u + \tu\|_{L^4}^2 \leq 4\,\ggd_1^2 N_1^2 \|u - \tu\|_{H^1 (\Omega)}^2
\end{align*}
and
\begin{align*}
	&\|u^3 - \tu^3\|^2 = \|(u - \tu)(u^2 + u \tu + \tu)^2 \|^2 \\[3pt]
	= &\, \int_\Omega |u(x) - \tu (x)|^2 |u^2(x) + u(x)\tu(x) + \tu^2(x)|^2 \, dx \\
	\leq &\, \left(\int_\Omega |u(x) - \tu (x)|^6\, dx\right)^{1/3} \left(\int_\Omega |u^2 (x) + u(x)\tu (x) + \tu^2 (x)|^3\, dx\right)^{2/3} \\[2pt] 
	= &\, \|u - \tu \|_{L^6}^2 \| u^2 + u \tu + \tu^2 \|_{L^6}^4 \leq 4 \ggd_2^2 \|u - \tu \|_{H^1}^2 \| 2u^2 + 2\tu^2 \|_{L^3}^2 \\[6pt]
	\leq &\, 4 \ggd_2^2 \|u - \tu \|_{H^1}^2 \cdot (4 \|u\|_{L^6}^4 + 4 \|\tu\|_{L^6}^4) \leq 32\, \ggd_2^2 \,N_2^4 \|u - \tu \|_{H^1 (\Omega)}^2.
\end{align*}
Substitute the above two inequalities into \eqref{fg1}. We obtain
\beq \bl{fg2}
	\begin{split}
	\| f(g) - f(\tg) \|_H^2 \leq &\, \left(4\,\ggd_1^2 N_1^2 (4a^2 + 2\beta^2 ) + 128\, b^2 \ggd_2^2 \,N_2^4  + 2q^2 \right)  \|u - \tu \|_{H^1 (\Omega)}^2 \\[3pt]
	+&\, 6 \|v - \tv\|^2 + (4 + 2r^2)\|w - \tw \|^2 
	\end{split}
\eeq
which shows that \eqref{fHE} is valid with the constant $C_E (M) > 0$ given by
$$
	C_E (M) = \sqrt{\max \left\{4\,\ggd_1^2 N_1^2 (4a^2 + 2\beta^2 ) + 128\, b^2 \ggd_2^2 \,N_2^4  + 2q^2, \; 6, \, 4 + 2r^2 \right\}}.
$$
The proof is completed.
\end{proof}

	Now we prove the existence of an exponential attractor for the Hindmarsh-Rose semiflow $\{S(t)\}_{t \geq 0}$ generated by the Hindmarsh-Rose evolutionary equation \eqref{pb}

\begin{theorem} \label{EA}
	Under the same assumptions as in Theorem \ref{MTh}, there exists an exponential attractor $\mathscr{E}$ in the space $H = L^2 (\gw, \mathbb{R}^3)$ for the Hindmarsh-Rose semiflow $\{S(t)\}_{t \geq 0}$ generated by the weak solutions of the diffusive Hindmarsh-Rose equations \eqref{pb}. 
\end{theorem}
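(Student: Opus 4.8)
The plan is to verify, for the Hindmarsh-Rose semiflow $\{S(t)\}_{t\ge0}$ on $H$, the three hypotheses of Theorem~\ref{ExpAr} with the choice $t^{*}=1$. The system \eqref{pb}--\eqref{opA} is of the abstract form \eqref{ea00} with $\mathcal{A}=-A=\operatorname{diag}(-d_{i}\Delta)$ under homogeneous Neumann conditions, which is densely defined, nonnegative and self-adjoint with compact resolvent on $H$, so its spectrum is a sequence $\lambda_{m}\to\infty$ and the framework of Section~2 applies verbatim. Theorem~\ref{Lm2} provides the global solutions with the regularity \eqref{solg}, and Lemma~\ref{LpHc} provides precisely the structural inequalities \eqref{ea0}, \eqref{mp} (that is, \eqref{fHE} and \eqref{fmn}) on every positively invariant bounded subset of $E$. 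What remains is to build one set $M\subset H$ that is nonempty, compact, positively invariant and absorbing, and smooth enough that the Lipschitz-in-time condition holds; Theorem~\ref{SP} then supplies the squeezing property on $M$, and Theorem~\ref{ExpAr} delivers $\mathscr{E}$ together with the estimate \eqref{dimF}.

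To construct $M$, I would start from the $(H,E)$-absorbing ball $B_{Q}=\{g\in E:\|g\|_{E}\le Q\}$ given by Theorem~\ref{AbE} and run a parabolic regularity bootstrap. On $B_{Q}$ one has $\|f(g)\|_{H}\le c_{1}(\|u\|_{L^{6}}^{3}+\|u\|_{L^{4}}^{2}+1)\le c_{2}$, so inserting this into the variation-of-constants formula $g(t)=e^{A}g(t-1)+\int_{0}^{1}e^{A\sigma}f(g(t-\sigma))\,d\sigma$ and using $\|(-A)^{\gamma}e^{A\sigma}\|_{\mathcal{L}(H)}\le M_{\gamma}\sigma^{-\gamma}$ with a fixed $\gamma\in(3/4,1)$ produces a bound, uniform in $t$ for $t$ large, in $D((-A)^{\gamma})\hookrightarrow H^{2\gamma}(\Omega,\mathbb{R}^{3})\hookrightarrow L^{\infty}(\Omega,\mathbb{R}^{3})$, i.e.\ a uniform bound $\|u(t)\|_{L^{\infty}}\le Q_{\infty}$. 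With this $L^{\infty}$-control, $\nabla(u^{2})=2u\nabla u$ and $\nabla(u^{3})=3u^{2}\nabla u$ are bounded in $L^{2}$, hence $\|f(g(t))\|_{H^{1}}\le Q_{1}$ for $t$ large, and feeding this back into the same formula with $\|(-A)^{1/2}e^{A\sigma}\|\le M_{1/2}\sigma^{-1/2}$ gives a uniform bound $\|g(t)\|_{D(A)}\le Q_{2}$ for $t$ large. Thus a bounded ball $B_{*}\subset D(A)$ absorbs every bounded subset of $H$; choosing $T_{1}$ with $S(t)B_{*}\subset B_{*}$ for $t\ge T_{1}$ and setting
\[
   M=\overline{\textstyle\bigcup_{t\ge T_{1}}S(t)B_{*}}^{\,H}
\]
yields a set that is bounded in $D(A)$ (a fortiori in $E$), closed in $H$, hence compact in $H$ by the Rellich embedding $E\hookrightarrow\hookrightarrow H$, positively invariant (by continuity of each $S(\tau):H\to H$), and absorbing.

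With $M$ in hand the three conditions are checked quickly. For condition~1), $M$ is compact in $H$, positively invariant and bounded in $E$, so Lemma~\ref{LpHc} gives \eqref{ea0} and \eqref{mp} on $M$ and Theorem~\ref{SP} produces an integer $m$ for which the squeezing property holds on $M$ with $P=P_{m}$. For condition~2), taking the $H$-inner product of the difference equation with $g-h$ and using \eqref{fmn}, exactly as in the derivation of \eqref{ea7}, gives $\|S(t)g_{0}-S(t)h_{0}\|\le e^{C^{*}t}\|g_{0}-h_{0}\|$, so $K(t)=e^{C^{*}t}$ is bounded on $[0,1]$ and $S(t):M\to M$. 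For condition~3), if $g_{0}\in M$ then $g(t)=S(t)g_{0}$ is a strong solution of \eqref{pb} that stays in $M\subset D(A)$, so
\[
   \|g'(t)\|_{H}=\|Ag(t)+f(g(t))\|_{H}\le\sup_{h\in M}\|Ah\|_{H}+\sup_{h\in M}\|f(h)\|_{H}=:L_{0}<\infty
\]
uniformly in $t\ge0$ and $g_{0}\in M$, whence $\|S(t)g_{0}-S(s)g_{0}\|_{H}\le L_{0}|t-s|$. Theorem~\ref{ExpAr} then gives the exponential attractor $\mathscr{E}\subset H$, and \eqref{dimF} shows $\dim_{F}\mathscr{E}<\infty$, so in particular $\mathscr{A}\subset\mathscr{E}$ has finite fractal dimension.

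The hard part is the regularity bootstrap producing the $D(A)$-bounded absorbing ball $B_{*}$: because the reaction term carries the cubic $u^{3}$ and the spatial dimension may be $3$, the $H^{1}$-bound of Theorem~\ref{AbE} alone does not control $f(g)$ in $H^{1}$, and one is forced to first climb to a fractional domain $D((-A)^{\gamma})$ with $\gamma>3/4$ in order to extract an $L^{\infty}$-bound on $u$ before the $H^{1}$-estimate on $f$ can be closed; carrying this out, with careful time-shift bookkeeping in the Duhamel argument and attention to the Neumann boundary conditions in identifying the fractional power domains, is where the real work lies. Everything else---the two Lipschitz bounds and the direct appeal to Theorem~\ref{SP}---is routine.
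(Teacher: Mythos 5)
Your proposal is correct and follows the paper's overall strategy---verify the three hypotheses of Theorem \ref{ExpAr} at $t^{*}=1$, invoking Lemma \ref{LpHc} for \eqref{ea0} and \eqref{mp} and then Theorem \ref{SP} for the squeezing property---but it diverges in the construction of the absorbing set and in the time-regularity step. The paper takes $M=\overline{\bigcup_{0\le t\le T^{*}}S(t)B_{E}(Q)}$ as in \eqref{SetM}, bounded only in $E$, and proves the continuity of $t\mapsto S(t)g_{0}$ via the Duhamel formula, estimating $\|e^{At}g_{0}-e^{A\tau}g_{0}\|$ spectrally in \eqref{LpG}; that computation actually yields only $\|e^{At}g_{0}-e^{A\tau}g_{0}\|\le G\,|t-\tau|^{1/2}$ (the substitution of $G^{2}|t-\tau|$ for the unsquared norm in \eqref{Me} is a slip), i.e.\ H\"older-$\tfrac12$ rather than Lipschitz continuity in time. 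Your bootstrap to a $D(A)$-bounded absorbing ball is more laborious---it needs the fractional-power smoothing estimates, the identification of $D((-A)^{\gamma})$ under Neumann conditions, and the embedding into $L^{\infty}$ for $\gamma>3/4$ in dimension three---but it buys a genuine uniform bound on $\|Ag+f(g)\|_{H}$ over $M$ and hence honest Lipschitz continuity in $t$, which is what condition 3) of Theorem \ref{ExpAr} literally requires. Your condition 2) is also handled more economically, by quoting the monotonicity-based estimate \eqref{fmn}/\eqref{ea7} instead of redoing the componentwise energy estimates of the paper's Step 2. Both routes reach the same conclusion; yours trades extra parabolic regularity theory up front for a cleaner and strictly stronger verification of the Lipschitz hypotheses.
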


\begin{proof}
We shall go through the following steps to check all the three conditions stated in Theorem \ref{ExpAr}. 

Step 1. First we show that there exists a compact, positively invariant and absorbing set $M \subset H$ for the Hindmarsh-Rose semiflow $\{S(t)\}_{t \geq 0}$ such that \eqref{ea0} and \eqref{mp} are satisfied. Then according to Theorem \ref{SP} the squeezing property is satisfied for the mapping $S(t^*)$ at $t^* = 1$ on this set $M$.  

Theorem \ref{AbE} has shown that the closed and bounded ball $B_E (Q)$ centered at the origin with radius $Q > 0 $ in the space $E = H^1 (\gw, \mathbb{R}^3)$ is an absorbing set for this semiflow. We can easily verify that the set
\beq \label{SetM}
	M = \overline{\bigcup_{0 \leq t \leq T^*}  S(t) B_E (Q)}
\eeq
is a compact, positively invariant and absorbing set in the space $H$ for this semiflow, where $T^* = T^* (B_E (Q))$ is the permanently entering time for the solution trajectories starting from the ball $B_E (Q)$ into itself, as indicated in \eqref{ac}. The compactness of $M$ in $H$ is inferred by the boundedness of $M$ in the space $E$ and the compact embedding $E \hookrightarrow H$ so that the cylinder $[0, T^*] \times B_E (Q)$ is a compact set in $\mathbb{R} \times H$ and by the fact that the function 
\beq \bl{Stg}
	\gamma (t, g) = S(t) g \,\; \text{is continuous on} \,\; [0, T^*] \times B_E (Q).
\eeq

In Lemma \ref{LpHc} it has been shown that the nonlinear mapping $f(g)$ given in \eqref{opf} satisfies the Lipschitz continuous condition \eqref{ea0} and the monotone condition \eqref{mp} on this  set $M$ given in \eqref{SetM}. Moreover by the continuity of the functions $\ga (t, g)$ in \eqref{Stg}, we see that 
\beq \bl{gabd}
	G = \max \{\|\ga (t, g)\|_E: (t, g) \in [0, T^*] \times B_E (Q)\} < \infty.
\eeq

Thus we can apply Theorem \ref{SP} with its proof to confirm that the squeezing property is satisfied by the mapping $S(t^*)$ at $t^* = 1$ so that the squeezing property is satisfied by the Hindmarsh-Rose semiflow $\{S(t)\}_{t \geq 0}$ on this set $M$ in $H$. Therefore, the first condition in Theorem \ref{ExpAr} is satisfied by the Hindmarsh-Rose semiflow.

        Step 2. Next we show that, for the Hindmarsh-Rose semiflow and for any $t \in [0, t^*] = [0, 1]$, the mapping $S(t): M \to M$ is Lipschitz continuous in $H$ and the associated Lipschitz constant $K(t): [0, 1] \to (0, \infty)$ is a bounded function. 

For this purpose, consider any two $g_0 = (u_0, v_0, w_0), \tg_0 = (\tu_0, \tv_0, \tw_0) \in M$ and the solutions $g(t) = S(t) g_0$ and $\tg (t) = S(t) \tg_0$ for $t \in [0, 1]$. Then $h(t) = g(t) - \tg (t)$ satisfies the equation
\beq \label{gg}
	\begin{split}
	\frac{dh}{dt} &\, = Ah + f(g) - f(\tg), \quad t > 0, \\
	& h(0) = h_0 = g_0 - \tg_0.
	\end{split}
\eeq
The three component functions of $h(t) = (U(t), V(t), W(t))$ can be estimated as follows. First,

\beq \label{Uq}
	\begin{split}
	&\frac{1}{2} \frac{d}{dt} \|U\|^2 + d_1\|\nb U\|^2 =  \langle f_1 (g) - f_1 (\tg), u - \tu \rangle \\[6pt]
	= &\,  \langle (\vp (u) - \vp (\tu)) + (v  - \tv) - (w - \tw), u - \tu \rangle \\[5pt]
	= &\, \int_\gw \left(a(u^2 -  \tu^2) - b(u^3 - \tu^3) + (v  - \tv) - (w - \tw)\right) (u - \tu)\, dx \\
	= &\,  \int_\gw \left(a(u -  \tu)^2 (u + \tu) - b(u - \tu)^2 (u^2 \tu + u \tu + \tu^2)\right) dx \\
	&\, + \int_\gw ((v  - \tv)(u - \tu) - (w - \tw)(u - \tu)) \, dx \\
	\leq &\, \int_\gw (u -  \tu)^2 \left[ a(u  + \tu) - b (u^2 + u \tu + \tu^2)\right] dx \\[2pt]
	&\, + \| u - \tu \| (\|v - \tv \| + \|w - \tw \|) \\
	\leq &\, \int_\gw (u -  \tu)^2 \left[ a(u  + \tu) - b (u^2 + u \tu + \tu^2)\right] dx + 2 \|g - \tg \|^2
	\end{split}
\eeq
and by Young's inequality we have 
\begin{gather*}
	a (u + \tu) -  - b (u^2 + u \tu + \tu^2) = [a (u + \tu) - b u\tu] - b(u^2 + \tu^2) \\[5pt]
	\leq  \left(\frac{b}{4} u^2 + \frac{a^2}{b}\right) + \left(\frac{b}{4} \tu^2 + \frac{a^2}{b}\right) + \frac{b}{2} (u^2 + \tu^2) - b(u^2 + \tu^2) \leq - \frac{b}{4} (u^2 + \tu^2) + \frac{2a^2}{b}.
\end{gather*}
It follows that
\beq \label{Ue}
	\begin{split}
	& \frac{d}{dt} \|U\|^2 \leq \frac{d}{dt} \|U\|^2 + 2d_1\|\nb U\|^2 \\[5pt]
	\leq &\, 2\int_\gw (u - \tu)^2 \left( - \frac{b}{4} (u^2 + \tu^2) + \frac{2a^2}{b}\right) dx + 4 \|g - \tg\|^2 \\
	\leq &\, \int_\gw (u - \tu)^2 \left( - \frac{b}{2} (u^2 + \tu^2)\right) dx + \frac{4a^2}{b} \|u - \tu \|^2 + 4 \|g - \tg\|^2 \\
	\leq &\, - \frac{b}{2} \int_\gw (u - \tu)^2 (u^2 + \tu^2)\, dx + 4\left( 1+ \frac{a^2}{b}\right) \|h \|^2. 	
	\end{split}
\eeq
Similarly, for the second and third components of $h(t) = g(t) - \tg (t) = (U(t), V(t), W(t))$, we get

\beq \label{Ve}
	\begin{split}
	\frac{d}{dt} &\, \|V\|^2 \leq \frac{d}{dt} \|V\|^2 + 2d_2\|\nb V\|^2 \leq 2\langle \psi (u) - \psi (\tu) - (v - \tv), v - \tv \rangle \\[3pt]
	= &\, 2\int_\gw \left( - \beta (u^2- \tu^2) - (v  - \tv) \right) (v - \tv)\, dx \\
	\leq &\, 2\int_\gw \left(- \beta (u - \tu) u (v - \tv) - \beta (u - \tu)\tu (v - \tv)\right) dx \\
	\leq &\, \int_\gw \left(\frac{bu^2}{2} (u - \tu)^2 + \frac{b\tu^2}{2} (u - \tu)^2\right) dx + \frac{4\beta}{b} \|v - \tv \|^2  \\
	\leq &\; \frac{b}{2} \int_\gw \, (u^2 + \tu^2) (u - \tu)^2 \, dx + \frac{4\beta}{b} \|h \|^2
	\end{split}
\eeq
and
\beq \label{We}
	\begin{split}
	\frac{d}{dt} &\, \|W\|^2 \leq \frac{d}{dt} \|W\|^2 + 2d_3\|\nb W\|^2 \leq 2\langle q(u - \tu) - r(w - \tw), w - \tw \rangle \\[3pt]
	= &\, 2\int_\gw \left(q (u- \tu) - r(w  - \tw) \right) (w - \tw)\, dx \\[3pt]
	\leq &\, q \|u - \tu\|^2 + (q + 2r) \|w - \tw \|^2 \leq 2 (q + r) \|h \|^2.
	\end{split}
\eeq
Add up the inequalities \eqref{Ue}, \eqref{Ve} and \eqref{We} with a cancellation of the first terms on the rightmost side of \eqref{Ue} and \eqref{Ve}. Then we obtain
\beq \label{He}
	\frac{d}{dt} \|h \|^2 = \frac{d}{dt} \left(\|U\|^2 + \|V\|^2 + \|W\|^2 \right) \leq C_* \| h \|^2, \quad t > 0,
\eeq
where $C_*$ is a positive constant given by
$$
	C_* = 4\left( 1+ \frac{\beta}{b} + \frac{a^2}{b}\right) + 2 (q + r).
$$
Solve the differential inequality \eqref{He} to get
\beq \label{Kt}
	 \|g(t) -\tg (t)\| = \|h(t) \| \leq e^{C_* t/2} \|h(0)\| = K(t) \|g_0 - \tg_0 \|,  \quad t \geq 0,
\eeq
where $K(t) = e^{C_* t/2} \in [1, e^{C_*/2}]$ is a bounded function on the time interval $t \in [0, t^*],\, t^* = 1$. The claim at the beginning of this step is proved.

Step 3. Finally we show that for any given $g \in M$ the mapping $S(\cdot)g_0: [0, t^*] = [0, 1] \to M$ is Lipschitz continuous and the associated Lipschitz constant $L(g_0): M \to (0, \infty)$ is a bounded function.

For any given $g_0 \in M$, since the weak solution $S(t)g_0, t \geq 0$, is a mild solution for the evolutionary equation \eqref{pb}, we have
\beq \label{mds}
	S(t) g_0 = e^{At} g_0 + \int_0^t e^{A(t-s)} f(g(s, g_0))\, dt, \quad t \geq 0,
\eeq
where the operator $A$ and the nonlinear mapping $f$ are defined in \eqref{opA} and \eqref{opf}, respectively. Note that the parabolic semigroup $\{e^{At}\}_{t \geq 0}$ is a self-adjoint contraction semigroup so that  $\max_{t \geq 0} \|e^{At}\|_{\mathcal{L} (H)} = 1$. A fundamental theorem on sectorial operators \cite[Theorem 37.5]{SY} shows that the operator function $e^{At}: [0, \infty) \to \mathcal{L}(H)$ is uniformly Lipschitz continuous. Actually, the spectral expansion of $e^{At}$ shows
$$
	(e^{At} g_0)(x) = \sum_{k=1}^\infty \, e^{- \lambda_k t} \langle g_0, e_k \rangle e_k (x), \quad g \in H, \; t \geq 0, 
$$
where $\{- \lambda_k\}_{k=1}^\infty$, with $0 \leq \lambda_k \to \infty$ as $k \to \infty$, is the set of all the eigenvalues (repeated to the respective multiplicities) of $A: D(A) \to H$,  and $\{e_k\}_{k=1}^\infty$ with $Ae_k = - \lambda_k e_k$ is the complete set of the orthonormal eigenvectors of $A$. Then we can derive the Lipschitz continuity of $e^{At}$: For any $g_0 \in M$ and any $0 \leq \tau < t$,
\begin{equation} \bl{LpG}
	\begin{split}
	&\|e^{At} g_0 - e^{A\tau} g_0\|^2 =  \sum_{k=1}^\infty \,  |e^{- \lambda_k(t - \tau)}|^2 |\langle g_0, e_k \rangle |^2  \\
	= &\, \sum_{k=1}^\infty \,  |e^{- \zeta_k}|^2 \lambda_k |t - \tau| |\langle g_0, e_k \rangle |^2  \; \; (\text{where} \; 0 \leq \lambda_k \tau \leq \zeta_k \leq \lambda_k t) \\[5pt]
	\leq &\, |t - \tau| \|\nb g_0 \|^2 \leq  |t - \tau| \|g_0 \|_E^2 \leq G^2 |t - \tau |.
	\end{split}
\end{equation}

Therefore, we can deduce that, for any  $0 \leq \tau < t$,
\begin{equation} \label{Me}
	\begin{split}
	&\|S(t) g_0 - S(\tau) g_0\|_H \leq \|e^{At} g_0 - e^{A\tau} g_0\| + \int_\tau^t \|e^{A(t-s)} f(g(s, g_0))\| \, dt  \\
	\leq &\, G^2 |t - \tau|  + \int_\tau^t \|e^{A(t-s)}\|_{\mathcal{L} (H)} \|f(g(s, g_0))\|_H\, dt \\
	\leq &\, G^2 |t - \tau|  + \int_\tau^t  \|f(g(s, g_0)) - f(0)\|_H\, dt + \int_\tau^t \|f(0)\|_H\, dt  \\
	\leq &\, G^2 |t - \tau|  + \int_\tau^t  \, C_E (M)  \|g(s, g_0)\|_E\, dt + (J + \alpha + q|c|) |t - \tau| \\[8pt]
	\leq &\, G^2 |t - \tau| \| + C_E (M)G^2 |t - \tau| + (J + \alpha + q|c|) |t - \tau| \\[10pt]
	\leq &\, L(M) |t - \tau|,
	\end{split}
\end{equation}
where the Lipschitz constant $C_E (M)$ is given in \eqref{fHE} and 
$$
	L(M) = (1 + C_E (M)) G^2 + (J + \alpha + q|c|).
$$
Then clearly the claim in Step 3 is proved.

Since we have proved that all the three conditions in Theorem \ref{ExpAr} are satisfied by the Hindmarsh-Rose semiflow, there exists an exponential attractor $\mathscr{E}$ in the space $H$ for this Hindmarsh-Rose semiflow. The proof is completed.
\end{proof}

The existence of an exponential attractor as well as the squeezing property have the following meaningful corollaries on the finite fractal dimensionality of the global attractor shown in \cite{PYS} and on the determining modes. 

\begin{corollary} \label{Col1}
	The global attractor $\mathscr{A}$ of the Hindmarsh-Rose semiflow has a finite fractal dimension
\beq \label{dimA}
	\dim_F (\mathscr{A}) \leq N \max \left\{1, \frac{\log (2\sqrt{2} K/\theta + 1)}{- \log \theta} \right\}, \quad \theta \in (0, 1),
\eeq
where $N$ is the rank of the spectral projection associated with the squeezing property of the mapping $S(1)$ and $K$ is the Lipschitz constant of the mapping $S(1)$ on the compact, positively invariant, absorbing set $M$. 
\end{corollary}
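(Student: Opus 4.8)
The plan is to obtain Corollary \ref{Col1} as an immediate consequence of the existence of the exponential attractor $\mathscr{E}$ furnished by Theorem \ref{EA}, combined with the general inclusion $\mathscr{A} \subset \mathscr{E}$ and the monotonicity of the fractal dimension under set inclusion, so that the abstract estimate \eqref{dimF} of Theorem \ref{ExpAr} transfers verbatim to $\mathscr{A}$. No new analysis is needed; the whole argument is a matter of invoking already established facts and keeping track of which constants appear.

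First I would record that, by Theorem \ref{MTh}, the global attractor $\mathscr{A}$ is a nonempty, compact, invariant subset of $H$, hence $S(t)\mathscr{A} = \mathscr{A}$ for every $t \geq 0$ and in particular $\mathscr{A}$ is a bounded set. Applying condition 3) in the definition of an exponential attractor with the bounded set $B = \mathscr{A}$ gives
$$
	\text{dist}_H (\mathscr{A}, \mathscr{E}) = \text{dist}_H (S(t)\mathscr{A}, \mathscr{E}) \leq C(\mathscr{A})\, e^{- \sigma t}, \quad t \geq 0 ,
$$
and letting $t \to \infty$ forces $\text{dist}_H(\mathscr{A}, \mathscr{E}) = 0$. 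Since $\mathscr{E}$ is compact and therefore closed, this yields $\mathscr{A} \subset \mathscr{E}$.

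Next, for every $\ve > 0$ the inclusion $\mathscr{A} \subset \mathscr{E}$ gives $N_\ve [\mathscr{A}] \leq N_\ve [\mathscr{E}]$, so by the definition of fractal dimension $\dim_F (\mathscr{A}) \leq \dim_F (\mathscr{E})$. It then remains to quote the bound \eqref{dimF} of Theorem \ref{ExpAr} for the very exponential attractor constructed in the proof of Theorem \ref{EA}: there $N$ is the rank of the spectral projection $P_m$ delivered by Theorem \ref{SP} in Step 1, while the role of the Lipschitz constant $L$ of $S(t^*) = S(1)$ on the positively invariant absorbing set $M$ is played by $K = K(1) = e^{C_*/2}$, the constant identified in \eqref{Kt} of Step 2. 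Substituting this $K$ for $L$ in \eqref{dimF} produces exactly \eqref{dimA}.

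I do not anticipate any genuine analytic obstacle: every ingredient is either a general fact about global versus exponential attractors or a constant already produced in the proof of Theorem \ref{EA}. The only point meriting a little care is to confirm that the integer $N$ and the constant $K$ displayed in \eqref{dimA} are literally the ones generated, respectively, by the squeezing property at $t^* = 1$ in Theorem \ref{SP} and by the $H$-Lipschitz estimate \eqref{Kt} at $t = 1$, so that the abstract dimension bound \eqref{dimF} may be transferred to $\mathscr{A}$ with no adjustment of parameters.
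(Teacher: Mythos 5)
Your proposal is correct and follows essentially the same route as the paper's own proof: establish $\mathscr{A} \subset \mathscr{E}$ via the invariance of $\mathscr{A}$ and the exponential attraction of $\mathscr{E}$ (so $\dist_H(\mathscr{A},\mathscr{E})=0$ and closedness of $\mathscr{E}$ gives the inclusion), then transfer the bound \eqref{dimF} by monotonicity of the fractal dimension. Your write-up is in fact slightly more careful than the paper's in identifying $N$ with the rank of $P_m$ from Theorem \ref{SP} and $K$ with $K(1)=e^{C_*/2}$ from \eqref{Kt}, but no new idea is involved.
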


\begin{proof}
   This result is simply implied by the inclusion of the global attractor $\mathscr{A}$ in the exponential attractor $\mathscr{E}$,
$$
	\mathscr{A} \subset \mathscr{E}
$$
because $\lim_{t \to \infty} dist_H (S(t) \mathscr{A}, \mathscr{E}) = dist_H (\mathscr{A}, \mathscr{E}) = 0$, and that by definition the exponential attractor $\mathscr{E}$ has a finite fractal dimension. The estimate \eqref{dimA} follows from Theorem \ref{ExpAr}.
\end{proof}	

\begin{corollary} \label{Col2}
      Under the same assumptions as in Theorem \ref{MTh}, the orthogonal projection of the trajectories in the global attractor $\mathscr{A}$ on the finite dimensional subspace $PH$ of the low modes is determining in the sense that, for two trajectories $g(t)$ and $\tg (t)$ in $\mathscr{A}$, if
      $$
      	\|P g(t) - P \tg (t)\|_H \to 0, \quad \text{as} \; \; t \to \infty,
      $$
then
$$
	\|g(t) - \tg (t) \|_H \to 0, \quad \text{as} \; \; t \to \infty.
$$	
Here the finite-rank orthogonal projection $P$ is affiliated with the corresponding squeezing property of the Hindmarsh-Rose semiflow.
\end{corollary}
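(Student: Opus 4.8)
The plan is to run the squeezing dichotomy at the integer times $n = 0, 1, 2, \dots$ along the two trajectories and combine the contraction alternative with the hypothesis that the low modes die out. Set $\xi(t) = g(t) - \tg(t)$. Since $\mathscr{A}$ is invariant and the set $M$ in \eqref{SetM} is absorbing, we have $\mathscr{A} \subset M$, so both trajectories remain in the compact, positively invariant set $M$ on which the squeezing property of $S = S(1)$ holds, with the finite-rank projection $P = P_m$ and constant $0 < \delta < 1$ produced in the proofs of Theorem \ref{SP} and Theorem \ref{EA}. Writing $g(n+1) = S(1)g(n)$ and $\tg(n+1) = S(1)\tg(n)$, the \emph{pointwise} dichotomy established inside the proof of Theorem \ref{SP} (for the given pair, either the map $S$ contracts or the cone condition holds) gives, for every $n \ge 0$, that either
$$
	\|\xi(n+1)\| \leq \delta\,\|\xi(n)\| \qquad \text{or} \qquad \|Q\xi(n+1)\| \leq \|P\xi(n+1)\| ,
$$
and in the second case $\|\xi(n+1)\|^2 = \|P\xi(n+1)\|^2 + \|Q\xi(n+1)\|^2 \leq 2\,\|P\xi(n+1)\|^2$.

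Next I would set $\eta_n = \|\xi(n)\|$ and $\rho_n = \|P\xi(n)\|$. The hypothesis $\|Pg(t) - P\tg(t)\|_H \to 0$ forces $\rho_n \to 0$, and the dichotomy above yields the scalar recursion
$$
	\eta_{n+1} \leq \max\{\,\delta\,\eta_n,\ \sqrt{2}\,\rho_{n+1}\,\}, \qquad n \geq 0 .
$$
An elementary induction then finishes this part: given $\ve > 0$, choose $N$ with $\sqrt{2}\,\rho_n \leq \ve$ for all $n \geq N$; one shows $\eta_n \leq \max\{\delta^{\,n-N}\eta_N,\ \ve\}$ for all $n \geq N$ (the inductive step uses $\delta\ve < \ve$), and letting $n \to \infty$ gives $\limsup_{n\to\infty}\eta_n \leq \ve$. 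Here $\eta_N < \infty$ because $\mathscr{A}$, being compact, is bounded in $H$. Since $\ve$ is arbitrary, $\eta_n \to 0$.

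Finally, to pass from integer times to all $t \to \infty$, I would interpolate using the Lipschitz-in-time estimate \eqref{Kt}: for $t \in [n, n+1]$ we have $\xi(t) = S(t-n)g(n) - S(t-n)\tg(n)$ with $t - n \in [0,1]$, hence $\|\xi(t)\| \leq e^{C_*/2}\,\|\xi(n)\| = e^{C_*/2}\,\eta_n$. As $\eta_n \to 0$, it follows that $\|g(t) - \tg(t)\|_H \to 0$ as $t \to \infty$, which is the assertion. The routine ingredients here are the max-recursion lemma and the time interpolation; the one point that needs care — and which is really the conceptual heart of the "determining modes" mechanism — is that the cone alternative alone produces no decay, so the argument must chain the \emph{pointwise} dichotomy step by step and exploit $\rho_n \to 0$ precisely through the $\max$ in the recursion.
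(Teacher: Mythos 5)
Your argument is correct, and it is essentially the standard proof of the result that the paper does not write out: the paper's entire ``proof'' of this corollary is a one-line citation of the squeezing property together with Theorem 14.3 of \cite{Rb}, whereas you reproduce, self-contained, exactly the mechanism behind that cited theorem. The chain of reasoning is sound: $\mathscr{A}\subset M$ follows from invariance of $\mathscr{A}$ plus the absorbing property of the set $M$ in \eqref{SetM}; the per-pair dichotomy at each integer time gives the recursion $\eta_{n+1}\leq\max\{\delta\eta_n,\sqrt{2}\rho_{n+1}\}$; the max-induction kills $\eta_n$; and the interpolation via \eqref{Kt} upgrades the conclusion from integer times to all $t\to\infty$. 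The one point you are right to flag, and which deserves the emphasis you give it, is that the dichotomy must be read \emph{pointwise in the pair} $(g(n),\tg(n))$ rather than as a uniform alternative over all of $M$ --- Definition \ref{SQP} as written in the paper places the quantifiers ambiguously, but the proof of Theorem \ref{SP} does establish the pointwise version (for each fixed pair, either the cone condition holds at $t=1$ or the contraction estimate holds for that pair), which is what your step-by-step chaining requires. Compared with the paper, your route costs half a page but buys a proof that does not depend on looking up the precise hypotheses of an external theorem; the paper's route is shorter but leaves the reader to verify that the setting here (compact positively invariant $M$ containing $\mathscr{A}$, Lipschitz continuity of $S(1)$ on $M$) matches the hypotheses of the cited result.
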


This Corollary \ref{Col2} is a consequence of the squeezing property of the Hindmarsh-Rose semiflow shown in Theorem \ref{EA} above and Theorem 14.3 in \cite{Rb}.

\vspace{10pt}
\bibliographystyle{amsplain}

\end{document}